\documentclass[reqno,12pt]{amsart}
\usepackage{txfonts}
 \usepackage{amssymb,latexsym}
\usepackage{amsmath,amsfonts,amsthm}
\usepackage{graphicx}
\usepackage{indentfirst}
\usepackage{fancyhdr}
\pagestyle{fancy}
\usepackage{geometry}
\geometry{
  paper=a4paper,
  top=3cm,
  inner=2.5cm,
  outer=2.5cm,
  bottom=3cm,
  headheight=5ex,
  headsep=5ex,
}

\fancyhf{}
\fancyhead[CE]{$W$-Entropy formulae and differential Harnack estimates}
\fancyhead[CO]{Yu-Zhao Wang}
\fancyhead[RO,LE]{\thepage}

\fancypagestyle{plain}{%
 \fancyhead[LO,RE]{\usebox{\mygraphic}}
 
 }
\setlength{\headwidth}{\textwidth}
\linespread{1.1}

\begin{document}
\setlength{\abovedisplayshortskip}{2mm}
\setlength{\belowdisplayshortskip}{2mm}
\setlength{\abovedisplayskip}{2mm}
\setlength{\belowdisplayskip}{2mm}

\newtheorem{theorem}{Theorem}[section]
\newtheorem{corollary}[theorem]{Corollary}  
\newtheorem{lemma}[theorem]{Lemma}
\newtheorem{definition}[theorem]{Definition}
\newtheorem{proposition}[theorem]{Proposition}
\newtheorem{remark}[theorem]{Remark}
\newtheorem{example}[theorem]{Example}
\newtheorem{theoremalph}{Theorem}
 \renewcommand\thetheoremalph{\Alph{theoremalph}}

\newcommand{\bta}{\begin{theoremalph}}
\newcommand{\eeta}{\end{theoremalph}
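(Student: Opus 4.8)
The plan is to establish the $W$-entropy formula by reducing it to a single pointwise differential identity coming from a weighted Bochner computation, and then to read off the monotonicity and the rigidity from that identity. First I would pass to the function $f$ defined by $u=(4\pi t)^{-m/2}e^{-f}$, under which the heat equation $\partial_t u=Lu$ becomes
\[
\partial_t f = Lf - |\nabla f|^2 - \tfrac{m}{2t};
\]
this is the form in which the subsequent computations are cleanest. Rewriting $W_m(u,t)=\int_M\big(t|\nabla f|^2+f-m\big)\,u\,d\mu$ in terms of $f$ and differentiating in $t$ under the integral sign, I would use this evolution equation to eliminate $\partial_t f$ and integrate by parts against $d\mu=e^{-\phi}\,dV$ so as to move every derivative onto $f$. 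On a closed manifold this is routine; on a complete noncompact $M$ it must be justified by the familiar cutoff device---multiplying by $\varphi_R$ supported in $B(o,2R)$, letting $R\to\infty$, and absorbing the tails by means of the a priori Gaussian bounds on $u$, $f$, $|\nabla f|$, $|\nabla^2 f|$ valid for the solutions under consideration. I expect this analytic justification, rather than the algebra, to be the main obstacle.

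The core of the argument is the evaluation of $\frac{d}{dt}W_m$. After the integration by parts the leading contribution is governed by $\int_M L\big(t|\nabla f|^2\big)\,u\,d\mu$, which I would expand by the weighted Bochner formula
\[
\tfrac{1}{2}\, L|\nabla f|^2 = |\nabla^2 f|^2 + \langle\nabla f,\nabla Lf\rangle + \mathrm{Ric}(\nabla f,\nabla f) + \nabla^2\phi(\nabla f,\nabla f).
\]
The decisive step is to complete squares: the Hessian part is reorganized as $\big|\nabla^2 f-\tfrac{1}{2t}g\big|^2$ modulo terms that telescope against the remaining ones, while the discrepancy between $\Delta f$ and $Lf=\Delta f-\langle\nabla\phi,\nabla f\rangle$ is absorbed into a one-dimensional completion producing the synthetic-dimension correction $\tfrac{1}{m-n}\big(\langle\nabla\phi,\nabla f\rangle+\tfrac{m-n}{2t}\big)^2$. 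Collecting everything one is left with
\[
\frac{d}{dt}W_m(u,t) = -2t\int_M\Big(\big|\nabla^2 f-\tfrac{1}{2t}g\big|^2 + \mathrm{Ric}_{m,n}(L)(\nabla f,\nabla f) + \tfrac{1}{m-n}\big(\langle\nabla\phi,\nabla f\rangle+\tfrac{m-n}{2t}\big)^2\Big)u\,d\mu,
\]
where $\mathrm{Ric}_{m,n}(L)=\mathrm{Ric}+\nabla^2\phi-\tfrac{1}{m-n}\,\nabla\phi\otimes\nabla\phi$; when $\phi$ is constant (so that $L=\Delta$ and one takes $m=n$) the last term is absent and this is Ni's entropy formula for the linear heat equation.

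With this identity at hand the remainder is short. Monotonicity $\frac{d}{dt}W_m\le 0$ is immediate from the standing hypothesis $\mathrm{Ric}_{m,n}(L)\ge 0$, since each term in the integrand is then nonnegative. For the rigidity assertion I would read off the equality case directly: if $\frac{d}{dt}W_m(u,t_0)=0$ for some $t_0>0$ then, as $u>0$, pointwise on $M$ one gets $\nabla^2 f=\tfrac{1}{2t_0}g$, $\mathrm{Ric}_{m,n}(L)(\nabla f,\nabla f)=0$ and $\langle\nabla\phi,\nabla f\rangle=-\tfrac{m-n}{2t_0}$; by the classical rigidity theorem for complete manifolds carrying a function with $\nabla^2 f=c\,g$, $c>0$, the first relation forces $(M,g)$ to be $\mathbb{R}^n$ with $f$ a translate of $r^2/(4t_0)$ (and, in the weighted case, pins $\phi$ down to the corresponding Gaussian-soliton potential), so that $u$ is the Euclidean heat kernel. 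Finally, the differential Harnack estimate follows from the same circle of ideas: running the Bochner computation pointwise and closing it with the parabolic maximum principle (again with cutoffs on noncompact $M$) yields the Li--Yau type inequality $|\nabla\log u|^2-\partial_t\log u\le\tfrac{m}{2t}$, whose space--time integration is the Harnack inequality; alternatively one integrates the entropy identity above and invokes the sign of $W_m$.
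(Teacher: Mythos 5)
Your proposal does not prove any statement of this paper: what you have sketched is X.-D. Li's $\mathcal{W}$-entropy formula for the \emph{linear} weighted heat equation $\partial_t u=\Delta_f u$ with $u=(4\pi t)^{-m/2}e^{-\text{(potential)}}$ and nonnegative $m$-Bakry--\'Emery Ricci curvature, which the paper only quotes as background (equation \eqref{Lientropy}, cited from \cite{LiXD2}). The theorems actually proved here concern the \emph{porous medium equation} $\partial_t u=\Delta u^{\gamma}$ (and its weighted analogue) with $\gamma>1$, on a closed manifold whose Ricci (or $m$-Bakry--\'Emery Ricci) curvature is bounded below by $-K$ with $K\ge 0$. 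None of the distinctive difficulties of that setting appear in your outline: there is no pressure function $v=\tfrac{\gamma}{\gamma-1}u^{\gamma-1}$, no degenerate parabolic operator $\square=\partial_t-(\gamma-1)v\Delta$, and no mechanism for handling a \emph{negative} curvature lower bound. The rigidity discussion (Gaussian solitons, $\nabla^2 f=\tfrac{1}{2t_0}g$ forcing $M=\mathbb{R}^n$) belongs to the linear theory and has no counterpart in this paper.

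Concretely, the paper's argument runs through a Boltzmann--Nash entropy $\mathcal{N}_K(t)=-\sigma_K(t)\int_M vu\,dV$, computes $\tfrac{d^2}{dt^2}\mathcal{N}_K$ via the integral identities of Lemma \ref{pmeint}, and defines $\mathcal{W}_K=\mathcal{N}_K+\beta_K\tfrac{d}{dt}\mathcal{N}_K$; the whole point is then to choose the time-dependent coefficients $\sigma_K,\beta_K,\eta_K$ as solutions of the ODE system \eqref{etabetak}--\eqref{etak} so that the cross terms in $\Delta v$ complete to the two squares in \eqref{WKDentropy} \emph{despite} the extra terms generated by $\mathrm{Ric}\ge -Kg$. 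That coefficient-matching step (yielding $\sigma_K=\bigl(\tfrac{e^{2\kappa t}-1}{2\kappa}\bigr)^{a}$, $\beta_K=\tfrac{\sinh(2\kappa t)}{2\kappa}$, $\eta_K=\tfrac{2a\kappa}{1-e^{-2\kappa t}}$ with $\kappa=K\sup u^{\gamma-1}$) is the heart of the proof and is entirely absent from your sketch. The only point of genuine overlap is the warped-product reduction used for the weighted version (Theorem \ref{WKPMEentropy}), where the dimensional correction term $\tfrac{1}{m-n}\bigl(\langle\nabla v,\nabla f\rangle-\cdots\bigr)^2$ arises exactly as the fiber-direction Hessian contribution you describe; but that reduction presupposes the unweighted porous-medium entropy formula, which you have not established. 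You would need to restart from the pressure equation $\square v=|\nabla v|^2$ and Lemma \ref{pmeint}, not from the linear Bochner identity for $\log u$.
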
}
\newcommand{\bth}{\begin{theorem}}
\newcommand{\eeth}{\end{theorem}}
\newcommand{\ble}{\begin{lemma}}
\newcommand{\ele}{\end{lemma}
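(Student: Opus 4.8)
I would follow the route pioneered by Perelman and, in the case of the linear heat equation, by Ni: reduce the monotonicity of $W$ to a single pointwise Bochner-type identity and then integrate it. First, writing the positive solution $u$ of the heat equation $\partial_t u=Lu$ (where $L=\Delta-\langle\nabla\phi,\nabla\cdot\rangle$ is the Witten Laplacian, so that the natural measure is $e^{-\phi}\,dV$) in the self-similar form $u=(4\pi t)^{-n/2}e^{-f}$, I would record that $f$ then satisfies
\[
\partial_t f=-Lf+|\nabla f|^2-\frac{n}{2t},
\]
the last term coming from the Gaussian normalization. Since $\nabla u=-u\nabla f$, integration by parts gives $\int_M (Lf)\,u\,e^{-\phi}\,dV=\int_M|\nabla f|^2\,u\,e^{-\phi}\,dV$, and using this I would rewrite the $W$-functional of the statement as $W(u,t)=\int_M v\,e^{-\phi}\,dV$ with the local density $v:=\bigl[t(2Lf-|\nabla f|^2)+f-n\bigr]u$.

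The heart of the argument is then the evolution of $v$. Expanding $(\partial_t-L)v$, substituting the equation for $f$ to trade time derivatives for spatial ones, commuting the Laplacians, and invoking the Bochner--Weitzenb\"ock formula for the Witten Laplacian,
\[
\tfrac12 L|\nabla f|^2=|\nabla^2 f|^2+\langle\nabla f,\nabla Lf\rangle+\mathrm{Ric}_\phi(\nabla f,\nabla f),\qquad \mathrm{Ric}_\phi:=\mathrm{Ric}+\nabla^2\phi,
\]
I expect the higher-order terms to collapse, leaving a perfect square together with a curvature term:
\[
(\partial_t-L)v=-2t\Bigl(\bigl|\nabla^2 f-\tfrac{1}{2t}g\bigr|^2+\mathrm{Ric}_\phi(\nabla f,\nabla f)\Bigr)u
\]
(with the standard extra nonnegative term involving $\langle\nabla\phi,\nabla f\rangle$ if one replaces $n$ by $m$ and uses the $m$-Bakry--\'Emery tensor). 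Integrating over $M$ and using that $L$ is self-adjoint in $L^2(e^{-\phi}\,dV)$, so that $\int_M Lv\,e^{-\phi}\,dV=0$, gives $\frac{d}{dt}W(u,t)=\int_M(\partial_t-L)v\,e^{-\phi}\,dV$, which is precisely the asserted $W$-entropy formula; the monotonicity of $W$ is then immediate from the curvature hypothesis.

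The genuinely delicate point will be analytic rather than algebraic: on a complete noncompact $M$ one must justify both the integration by parts that produced $v$ and the vanishing of $\int_M Lv\,e^{-\phi}\,dV$, i.e.\ rule out boundary terms at infinity. I would handle this with the usual cutoff $\psi_R$ ($\psi_R\equiv1$ on $B_R$, $\mathrm{supp}\,\psi_R\subset B_{2R}$, $|\nabla\psi_R|\le C/R$), bounding the resulting error integrals by Gaussian upper bounds for the heat kernel and by the ensuing gradient and Hessian estimates for $f=-\log u-\tfrac n2\log(4\pi t)$, and then letting $R\to\infty$; under $\mathrm{Ric}_\phi\ge0$ together with whatever mild volume or $\phi$-growth condition the statement assumes, these ingredients are available. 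For the accompanying differential Harnack estimate I would not try to sign the $W$-integrand pointwise, but instead note that essentially the same computation shows $P:=t(2Lf-|\nabla f|^2)+f-n$ to be a subsolution of a drift heat equation; combined with the $t\to0^+$ behaviour of $P$ (modelled on the Euclidean heat kernel, for which $P\equiv0$), the parabolic maximum principle forces $P\le0$ everywhere, and integrating the resulting lower bound for $-\partial_t\log u$ along a minimizing geodesic between two spacetime points yields the Harnack inequality in its usual integrated form. The main obstacle throughout is thus this noncompact boundary-term bookkeeping --- together with keeping track of the extra terms in the Bochner identity should the underlying diffusion be nonlinear --- while the remainder is a lengthy but essentially mechanical computation.
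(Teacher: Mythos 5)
Your proposal does not address the statement at hand: the lemma you are asked to prove concerns the porous medium equation $\partial_t u=\Delta u^{\gamma}$ with $\gamma>1$, its pressure function $v=\tfrac{\gamma}{\gamma-1}u^{\gamma-1}$, and the degenerate, solution-dependent parabolic operator $\square=\partial_t-(\gamma-1)v\Delta$, whereas your entire argument is built around the \emph{linear} heat equation of the Witten Laplacian, Perelman's substitution $u=(4\pi t)^{-n/2}e^{-f}$, and the self-adjointness identity $\int_M Lv\,e^{-\phi}\,dV=0$. None of these ingredients survive the passage to the nonlinear setting. The diffusion coefficient $(\gamma-1)v$ in front of the Laplacian means the pressure satisfies $\square v=|\nabla v|^2$, and the Bochner-type computation for $w=|\nabla v|^2$ produces the extra first-order term $2(\gamma-1)w\Delta v$ and a drift $2\langle\nabla v,\nabla w\rangle$ that have no analogue in your linear calculation; likewise the evolution of $F_{\alpha}=\alpha\tfrac{v_t}{v}-\tfrac{|\nabla v|^2}{v}$ picks up the quadratic terms $F_1^2+(\alpha-1)(v_t/v)^2$ rather than collapsing into a single perfect square in $\nabla^2 f$. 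The correct route is to differentiate $|\nabla v|^2$ and $v_t$ directly under $\square$, use the ordinary Bochner formula $\tfrac12\Delta|\nabla v|^2=|\nabla\nabla v|^2+\langle\nabla v,\nabla\Delta v\rangle+\mathrm{Ric}(\nabla v,\nabla v)$ applied to $v$ (not to a logarithm of the solution), and carefully track the commutator $[\partial_t,\Delta]$ acting through the coefficient $(\gamma-1)v$.

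The integral bookkeeping is also structurally different from what you describe. The measure $u\,dV$ is preserved but $vu\,dV$ is not, and the relevant identities are $\tfrac{d}{dt}\int_M vu\,dV=-\gamma\int_M|\nabla v|^2u\,dV$ together with a second-derivative formula whose integrand is $2(\gamma-1)\bigl(|\nabla\nabla v|^2+\mathrm{Ric}(\nabla v,\nabla v)+(\gamma-1)(\Delta v)^2\bigr)vu$; the appearance of the trace term $(\gamma-1)(\Delta v)^2$ and of the constant $a=\tfrac{n(\gamma-1)}{n(\gamma-1)+2}$ in place of $n$ is precisely what your linear-equation template cannot produce. Finally, the manifold here is closed, so the noncompact cutoff analysis you flag as the ``genuinely delicate point'' is not the issue at all; the difficulty is entirely in the nonlinear algebra. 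As written, your argument proves (a sketch of) Ni's and X.-D.~Li's entropy formula for the Witten heat equation, not the lemma of this paper.
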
}
\newcommand{\bco}{\begin{corollary}}
\newcommand{\eco}{\end{corollary}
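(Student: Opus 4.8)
The plan is to establish the $W$-entropy formula by differentiating the entropy functional in time, in the spirit of Perelman's monotonicity formula for the Ricci flow as adapted to the heat equation of the Witten Laplacian by L.~Ni and by X.-D.~Li, and then to read off the differential Harnack (Li--Yau--Hamilton) estimate as its pointwise counterpart. Write a positive solution of $\partial_t u=Lu$, where $L=\Delta-\nabla f\cdot\nabla$ is the Witten Laplacian (also denoted $\Delta_f$), in the form $u=(4\pi t)^{-m/2}e^{-\phi}$; a direct computation gives
\[
\partial_t\phi=\Delta_f\phi-|\nabla\phi|^2-\frac{m}{2t}.
\]
The first step is to combine this with the $m$-Bakry--\'Emery Bochner formula
\[
\Delta_f|\nabla\phi|^2=2|\nabla^2\phi|^2+2\langle\nabla\phi,\nabla\Delta_f\phi\rangle+2\,\mathrm{Ric}_{m,n}(L)(\nabla\phi,\nabla\phi)+\frac{2}{m-n}\langle\nabla f,\nabla\phi\rangle^2
\]
in order to derive the evolution equations of the building blocks $|\nabla\phi|^2$, $\Delta_f\phi$ and $\phi$ along the flow.

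I would then differentiate the entropy $W(u,t)$ fixed above, using that $u\,d\mu$ has constant total mass with $\partial_t(u\,d\mu)=(Lu)\,d\mu$, and integrate by parts so as to move all derivatives onto one factor. A convenient device is to write $W=\int_M v\,d\mu$ with the \emph{entropy density} $v=\big(t(2\Delta_f\phi-|\nabla\phi|^2)+\phi-m\big)u$ (the two expressions for $W$ agree since $\int_M(\Delta_f\phi)\,u\,d\mu=\int_M|\nabla\phi|^2\,u\,d\mu$), so that $\frac{d}{dt}W=\int_M(\partial_t-L)v\,d\mu$ and the whole problem reduces to computing $(\partial_t-L)v$. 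After the Bochner substitution the first-order terms cancel and $(\partial_t-L)v$ collapses into
\[
-2t\Big(\big|\nabla^2\phi-\tfrac{1}{2t}g\big|^2+\mathrm{Ric}_{m,n}(L)(\nabla\phi,\nabla\phi)+\tfrac{1}{m-n}\big(\langle\nabla f,\nabla\phi\rangle+\cdots\big)^2\Big)u,
\]
which is $\le 0$ once $\mathrm{Ric}_{m,n}(L)\ge 0$. Integrating gives $\frac{d}{dt}W\le 0$, and equality at some $t_0>0$ forces $\nabla^2\phi=\tfrac{1}{2t_0}g$ together with the vanishing of the curvature and correction terms, the expected gradient-shrinking-soliton rigidity.

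I expect the main obstacle to be twofold. The algebraic part — arranging the exact cancellation of the first-order terms so that $(\partial_t-L)v$ is a perfect sum of squares plus a curvature term — is delicate and sensitive to the order of the substitutions; in particular the \emph{$m$-dimensional} corrections involving $\nabla f$ must be tracked carefully so that the Bochner remainder $\tfrac{2}{m-n}\langle\nabla f,\nabla\phi\rangle^2$ recombines into the sign-definite expression rather than obstructing it. The analytic part is justifying the integration by parts on a complete, possibly noncompact, manifold: one must ensure that the relevant integrals converge and that no boundary term survives at infinity, for which I would invoke Gaussian heat-kernel upper bounds and Li--Yau-type gradient estimates available under $\mathrm{Ric}_{m,n}(L)\ge 0$, together with a standard cutoff-and-limit argument (on a compact $M$ this step is automatic). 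Finally, the differential Harnack estimate is the pointwise shadow of the same identity: the computation above shows that $w:=t(2\Delta_f\phi-|\nabla\phi|^2)+\phi-m$ is a subsolution of a drift heat equation, $(\partial_t-L+2\nabla\phi\cdot\nabla)w\le 0$, so the parabolic maximum principle — once the behaviour of $w$ as $t\downarrow 0$ is controlled, again via heat-kernel asymptotics — yields $w\le 0$ on $M\times(0,T)$, which is the asserted Li--Yau--Hamilton inequality, in the same family as Hamilton's bound $\Delta_f\log u+\frac{m}{2t}\ge 0$.
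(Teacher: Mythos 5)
There is a genuine mismatch here: your proposal does not prove the statement in question, nor indeed any of the results of this paper. What you have sketched is the $W$-entropy monotonicity formula for the \emph{linear} weighted heat equation $\partial_t u=\Delta_f u$, with $u=(4\pi t)^{-m/2}e^{-\phi}$, the entropy density $\bigl(t(2\Delta_f\phi-|\nabla\phi|^2)+\phi-m\bigr)u$, and the $m$-Bakry--\'Emery Bochner formula. That is X.-D. Li's theorem (equation \eqref{Lientropy} in the introduction), which this paper only quotes as background and motivation. The results actually being proved here concern the \emph{porous medium equation} $\partial_t u=\Delta u^{\gamma}$ with $\gamma>1$; the fundamental object is the pressure function $v=\frac{\gamma}{\gamma-1}u^{\gamma-1}$, which satisfies the degenerate nonlinear equation $\square v=|\nabla v|^2$ with $\square=\partial_t-(\gamma-1)v\Delta$, and none of your linear-heat-kernel machinery (the logarithm $\phi$, the substitution $u=(4\pi t)^{-m/2}e^{-\phi}$, Gaussian heat-kernel bounds) applies to it.

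Concretely, the corollary you were asked to prove is an application of the already-established differential Harnack estimate $\frac{|\nabla v|^2}{v}-\alpha(t)\frac{v_t}{v}\le\varphi(t)$ of Theorem \ref{pmeGK}. The paper's argument for Corollary \ref{Harnack} is the standard one: take a constant-speed minimizing geodesic $\varsigma$ from $x_1$ to $x_2$ parametrized on $[t_1,t_2]$, write $v(x_2,t_2)-v(x_1,t_1)=\int_{t_1}^{t_2}\bigl(v_t+\langle\nabla v,\dot\varsigma\rangle\bigr)\,dt$, bound $v_t$ from below using the differential Harnack estimate, and absorb the cross term $\langle\nabla v,\dot\varsigma\rangle$ by Young's inequality against the surviving $\frac{1}{\alpha(t)}|\nabla v|^2$ term; the multiplicative version does the same for $\log v$. (The companion Corollary \ref{pfLaEst} is a purely algebraic consequence of the same pointwise estimate via the identity $\Delta(v^\beta)=\frac{\beta v^{\beta-1}}{\alpha(\gamma-1)}\bigl(\alpha\frac{v_t}{v}-\frac{|\nabla v|^2}{v}\bigr)$ under the choice $\frac{\alpha-1}{\alpha}=(\gamma-1)(\beta-1)$.) Your proposal contains neither the geodesic-integration step nor any reference to the pressure function or to the quantities $\alpha(t)$, $\varphi(t)$, so it cannot be adapted to the statement at hand; you would need to start over from Theorem \ref{pmeGK}.
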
}
\newcommand{\bde}{\begin{definition}}
\newcommand{\ede}{\end{definition}}
\newcommand{\bpr}{\begin{proposition}}
\newcommand{\epr}{\end{proposition}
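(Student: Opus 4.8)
The strategy follows the Perelman--Ni scheme: differentiate the $W$-entropy along the flow, turn the time derivative into spatial quantities via the equation, integrate by parts against the underlying measure, and collapse the result into a Bochner-type identity whose sign is controlled by the curvature hypothesis. \textbf{Step 1.} Write the positive solution $u$ in the normalized form $u=(4\pi t)^{-m/2}e^{-f}$, with $m$ the effective dimension ($m=n$ in the unweighted case, $m>n$ when a potential is present), and substitute into $\partial_t u=Lu$; using $\nabla u=-u\nabla f$ and $Lu=u(|\nabla f|^2-Lf)$ this yields the closed first-order equation $\partial_t f=Lf-|\nabla f|^2-\tfrac{m}{2t}$ (in a nonlinear-diffusion setting, the corresponding equation for the pressure). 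I would do this first, so that every later $\partial_t$ becomes $Lf$, $|\nabla f|^2$ and $f$.

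\textbf{Step 2.} Let $w$ be the integrand of the entropy, $W(u,t)=\int_M w\,u\,d\mu$; concretely one may take $w=t\bigl(2Lf-|\nabla f|^2\bigr)+f-m$, or the Ni-type variant $t|\nabla f|^2+f-m$. The crux is the pointwise identity $(\partial_t-L)(w\,u)=-2t\bigl(\text{a perfect square}+\text{a curvature term}\bigr)u$. Granting it and using $\int_M L(w\,u)\,d\mu=0$ (the divergence theorem for $d\mu$, once decay at infinity is justified), differentiation under the integral sign gives $\tfrac{d}{dt}W=\int_M(\partial_t-L)(w\,u)\,d\mu$, which is the asserted formula; the sign of the integrand yields the stated monotonicity under the curvature assumption.

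\textbf{Step 3.} To prove that identity, apply $\partial_t-L$ to $w\,u$ via the product rule $(\partial_t-L)(hu)=u\bigl((\partial_t-L)h+2\nabla f\cdot\nabla h\bigr)$ (valid since $(\partial_t-L)u=0$ and $\nabla u=-u\nabla f$), substitute the Step 1 equation wherever $\partial_t f$ occurs, and expand $L|\nabla f|^2$ with the Bochner formula $\tfrac12 L|\nabla f|^2=|\nabla^2 f|^2+\langle\nabla f,\nabla(Lf)\rangle+\mathrm{Ric}_\phi(\nabla f,\nabla f)$, where $\mathrm{Ric}_\phi=\mathrm{Ric}+\nabla^2\phi$. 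After all first-order terms cancel, the remainder should reduce to $-2t\bigl(|\nabla^2 f-\tfrac1{2t}g|^2+\mathrm{Ric}_\phi(\nabla f,\nabla f)\bigr)u$; for finite $m>n$ one further estimates $|\nabla^2 f-\tfrac1{2t}g|^2\ge\tfrac1n(\Delta f-\tfrac n{2t})^2$, substitutes $\Delta f=Lf+\nabla\phi\cdot\nabla f$, and absorbs the cross terms through $2ab\le\tfrac{a^2}{m-n}+(m-n)b^2$, so that the square becomes $\tfrac1m(Lf-\tfrac m{2t})^2$ and $\mathrm{Ric}_\phi$ improves to the $m$-dimensional Bakry--\'Emery Ricci tensor.

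\textbf{Step 4, and the main obstacle.} On a closed manifold, Steps 2--3 finish the proof. On a complete noncompact manifold I would localize: multiply $(\partial_t-L)(w\,u)$ by a cutoff $\varphi_R^2$, integrate, and let $R\to\infty$, using gradient estimates of Li--Yau and Hamilton type (together with whatever decay or bounded-geometry hypothesis makes $W$ finite) to kill the error terms and legitimize differentiation under the integral sign. The principal obstacle is Step 3: pushing the large first-order cancellation through cleanly and, in the weighted case, carefully bookkeeping the $\tfrac1{m-n}$ corrections so that the final curvature term is precisely the $m$-Bakry--\'Emery tensor rather than merely $\mathrm{Ric}_\phi$; a secondary issue is checking that the a priori estimates needed in Step 4 are available in the generality claimed. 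Finally, the differential Harnack estimate in the title follows along the same lines --- either integrate the pointwise inequality of Step 3, or run a parabolic maximum principle with the same cutoff on the auxiliary quantity $t\bigl(2Lf-|\nabla f|^2\bigr)+f$.
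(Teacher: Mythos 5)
Your proposal does not address the statement in question. The Proposition is a \emph{pointwise} Bochner-type differential inequality for the quantity $F_{\alpha}=\alpha(t)\frac{v_t}{v}-\frac{|\nabla v|^2}{v}+\varphi(t)$ built from the pressure function $v=\frac{\gamma}{\gamma-1}u^{\gamma-1}$ of the porous medium equation, under the degenerate nonlinear operator $\square=\partial_t-(\gamma-1)v\Delta$. What you have outlined instead is the Perelman--Ni--Li integral scheme for the $\mathcal{W}$-entropy of the \emph{linear} (weighted) heat equation: the substitution $u=(4\pi t)^{-m/2}e^{-f}$, the identity $(\partial_t-L)(wu)=-2t(\cdots)u$, and integration by parts. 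None of this applies here: for $\partial_t u=\Delta u^{\gamma}$ there is no such Gaussian normalization, the relevant evolution is $\square v=|\nabla v|^2$, and the Proposition is not an integral monotonicity statement at all --- it is the pointwise inequality that feeds the parabolic maximum principle in the proof of Theorem 1.3.

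Concretely, the missing ingredients are: (i) the evolution equation $\square F_{\alpha}=2\gamma\langle\nabla v,\nabla F_{\alpha}\rangle+2(\gamma-1)\bigl(|\nabla\nabla v|^2+{\rm Ric}(\nabla v,\nabla v)\bigr)+((\gamma-1)\Delta v)^2+(\alpha-1)(v_t/v)^2+\alpha'(v_t/v)+\varphi'$, which requires the nonlinear Bochner computation of Lemma 2.1, not the linear Bochner formula for $L|\nabla f|^2$; (ii) the Cauchy--Schwarz absorption $2(\gamma-1)|\nabla\nabla v|^2+((\gamma-1)\Delta v)^2\ge\frac1a((\gamma-1)\Delta v)^2$ with the PME-specific constant $a=\frac{n(\gamma-1)}{n(\gamma-1)+2}$; (iii) converting ${\rm Ric}\ge-Kg$ into $-2\gamma\kappa\frac{|\nabla v|^2}{v}$ via $\kappa=K\sup u^{\gamma-1}$, which is where the degeneracy ($v$ multiplying the Laplacian) is paid for; and, most importantly, (iv) the determination of $\alpha(t)$ and $\varphi(t)$ from the auxiliary function $\sigma(t)$ through the ODE system $(\sigma\alpha)'=\sigma'+2\gamma\kappa\sigma$, $(\sigma\varphi)'=\frac{a\sigma}{4}\bigl(\frac{\sigma'}{\sigma}+2\gamma\kappa\bigr)^2$, which is exactly what makes the leftover first-order terms collapse into $-\frac{\sigma'}{\sigma}F_{\alpha}$ and hence makes $\square(\sigma F_{\alpha})$ nonnegative modulo gradient terms. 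Without step (iv) there is no mechanism producing the Qian-type pair $(\alpha,\varphi)$ in \eqref{PMEalphavarphi}, and the maximum-principle conclusion $F_{\alpha}\ge0$ cannot be drawn.
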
}
\newcommand{\bre}{\begin{remark}}
\newcommand{\ere}{\end{remark}}
\newcommand{\beg}{\begin{example}}
\newcommand{\eeg}{\end{example}}

\newcommand{\beq}{\begin{equation}}
\newcommand{\eeq}{\end{equation}}
\newcommand{\ben}{\begin{equation*}}
\newcommand{\een}{\end{equation*}}
\newcommand{\beqn}{\begin{eqnarray}}
\newcommand{\eeqn}{\end{eqnarray}}
\newcommand{\be}{\begin{eqnarray*}}
\newcommand{\ee}{\end{eqnarray*}}
\newcommand{\ban}{\begin{align*}}
\newcommand{\ean}{\end{align*}}
\newcommand{\bal}{\begin{align}}
\newcommand{\eal}{\end{align}}
\newcommand{\bad}{\aligned}
\newcommand{\ead}{\endaligned}
\newcommand{\lan}{\langle}
\newcommand{\ran}{\rangle}

\newcommand{\na}{\nabla}
\newcommand{\vp}{\varphi}
\newcommand{\La}{\Lambda}
\newcommand{\la}{\lambda}
\newcommand{\Om}{\Omega}
\newcommand{\ta}{\theta}
\newcommand{\fr}{\frac}
\newcommand{\iy}{\infty}
\newcommand{\ve}{\varepsilon}
\newcommand{\pa}{\partial}
\newcommand{\al}{\alpha}
\newcommand{\mr}{\mathbb{R}^n}
\newcommand{\bu}{\bullet}
\newcommand{\si}{\sigma}
\newcommand{\Vol}{{\rm Vol}}
\newcommand{\Div}{{\rm div}}
\newenvironment{sequation}{\begin{equation}\small}{\end{equation}}
\newenvironment{tequation}{\begin{equation}\tiny}{\end{equation}}

\title
{$\mathcal{W}$-Entropy formulae  and  differential Harnack estimates for  porous medium equations on Riemannian manifolds}
\author{Yu-Zhao Wang}
\address{School of Mathematical Sciences, Shanxi University, Taiyuan, 030006, Shanxi, China}
\email{wangyuzhao@sxu.edu.cn}
\thanks{The author is supported by the National Science Foundation of China(NSFC, 11701347).}

\maketitle
\maketitle \numberwithin{equation}{section}
\maketitle \numberwithin{theorem}{section}
\setcounter{tocdepth}{2}
\setcounter{secnumdepth}{2}

\begin{abstract}
In this paper, we prove Perelman type $\mathcal{W}$-entropy formulae and global differential Harnack estimates  for positive solutions to porous medium equation on  closed Riemannian manifolds with Ricci curvature bounded below. As applications, we derive Harnack inequalities and Laplacian estimates.

\vspace{2mm}
\textbf{Mathematics Subject Classification (2010)}. Primary 58J35,	35K92; Secondary 35B40,35K55

\textbf{Keywords}. Porous medium equation, Perelman type entropy formula, differential Harnack  estimates, Bakry-\'Emery Ricci curvature.
\end{abstract}

\section{Introduction and main results}

Monotonicity formula and differential Harnack inequality are two  important tools in geometric analysis. The more spectacular one is the entropy monotonicity formula discovered by G.Perelman \cite{P} and related differential Harnack inequality for the conjugate heat equation under the Ricci flow. More precisely, let $(M,g(t))$ be the closed $n$-dimensional Riemannian manifolds along the Ricci flow and $(g(t), f(t), \tau(t))$ be a solution to conjugate heat equation coupled with Ricci flow
\begin{equation}\label{adjoint}
\partial_tg=-2{\rm Ric},\quad\partial_t f=-\Delta f+|\nabla f|^2+R+\frac n{2\tau},\quad\partial_t\tau=-1.
\end{equation}
Perelman \cite{P} introduced the following $\mathcal{W}$-entropy
$$
\mathcal{W}(g,f,\tau):=\int_M\Big(\tau(R+|\nabla f|^2)+f-n\Big)\frac{e^{-f}}{(4\pi\tau)^{\frac n2}}\,dV
$$
and proved its monotonicity
\begin{equation}\label{Pentropy}
\frac{d}{dt}\mathcal{W}(g,f,\tau)=2\tau\int_M\Big|R_{ij}+\nabla_i\nabla_jf-\frac1{2\tau}g_{ij}\Big|^2\frac{e^{-f}}{(4\pi\tau)^{\frac n2}}\,dV\ge0,
\end{equation}
where $\tau>0$ and $\int_M{(4\pi\tau)^{-\frac n2}}e^{-f}dV=1$. The static point of $\mathcal{W}$-entropy is called gradient shrinking Ricci soliton
\begin{equation}\label{soliton}
R_{ij}+\nabla_i\nabla_jf=\frac1{2\tau}g_{ij},
\end{equation}
which can be viewed as a model with singularity when studying the singularity formation of solutions of the Ricci flow.  Moreover, when $H(x,t)={(4\pi\tau)^{-\frac n2}}e^{-f}$ is a fundamental solution to the conjugate heat equation \eqref{adjoint}, Perelman's differential Harnack inequality holds \cite{P,NiLYH}
\begin{equation}\label{PLYH}
v_H:=\Big(\tau(R+2\Delta f-|\nabla f|^2)+f-n\Big)H\le0.
\end{equation}

A feature of Perelman's entropy monotonicity formula is that it holds at any dimension and without assumption of curvature condition. After Perelman's work, an interesting direction is that of finding entropy monotonicity formulae for other geometric evolution equations.  There were some development in this direction, L. Ni \cite{Nientropy}  derived the entropy monotonicity formula for the linear heat equation on Riemannian manifolds with nonnegative Ricci curvature,
\begin{equation}\label{Nientropy}
\frac{d}{dt}\mathcal{W}(f,\tau)=-2\tau\int_M\left(\Big|\nabla_i\nabla_jf-\frac1{2\tau}g_{ij}\Big|^2+R_{ij}f_if_j\right)u\,dV,
\end{equation}
where $u=(4\pi\tau)^{-\frac n2}e^{-f}$ is a positive solution to the heat equation $\partial_tu=\Delta u$ with $\int_Mu\,dV=1$, $\frac{d\tau}{dt}=1$ and $\mathcal{W}(f,\tau)$ is defined by
$$
\mathcal{W}(f,\tau):=\int_M\Big(\tau|\nabla f|^2+f-n\Big)u\,dV.
$$

When the Ricci curvature of $M$ is bounded below, Li-Xu \cite{LX} established some Perelman-Ni type entropy formulae for the linear heat equation and proved their monotonicity.

A natural question is how to establish the entropy formula for the nonlinear equation
on Riemannian manifolds.  Kotschwar-Ni \cite{KoNi} and Lu-Ni-Vazquez-Villani \cite{LNVV} obtained the entropy monotonicity formula for the $p$-Laplaican heat equation and the porous medium equation on compact Riemannian manifolds with nonnegative Ricci curvature respectively.
In \cite{WC2}, the author proved the entropy monotonicity formula for positive solution to the doubly nonlinear diffusion equation on the closed Riemannian manifolds with nonnegative Ricci curvature.

The first step in the study of the $\mathcal{W}$-entropy on Riemannian
manifolds with negative lower bound of Ricci curvature
is to find the suitable quantity to define the $\mathcal{W}$-entropy.
In \cite{LiLi2, LiLi3}, S. Li and X.-D. Li gave a new idea to introduce  the correct quantity for the definition of the $\mathcal{W}$-entropy for the heat equation of the Witten Laplacian on Riemannian manifolds with lower bound of the  infinity dimensional Bakry-Emery Ricci curvature. Motivated by their works, we can obtain the Perelman type $\mathcal{W}$-entropy monotonicity formula for the porous medium equation on closed Riemannian manifolds with Ricci curvature (or Bakry-Emery Ricci curvature) bounded below.

\begin{theorem}\label{KPMEentropy}
Let $(M,g)$ be a closed Riemannian manifold with  Ricci curvature bounded below by $-K(K\ge0)$. Suppose~$u$~be a smooth positive solution to the porous medium equation
\begin{equation}\label{PME}
\partial_tu=\Delta u^{\gamma}
\end{equation}
 and $v=\frac{\gamma}{\gamma-1}u^{\gamma-1}$ the pressure function. For any $\gamma>1$,
define the   Perelman-type $\mathcal{W}$-entropy
\begin{equation}\label{WKentropy}
\mathcal{W}_K(v,t):=\sigma_K\beta_K\int_M\left[\gamma\frac{|\nabla v|^2}{v}-\left(\frac{1}{\beta_K}+\frac{\dot{\sigma}_K}{\sigma_K}\right)\right]vu\,dV,
\end{equation}
Then we have
\begin{align}\label{WKDentropy}
\frac{d}{dt}\mathcal{W}_K(v,t)
\le&-2\sigma_K\beta_K\int_M (\gamma-1) \left|\nabla_i\nabla_jv+\frac{\eta_K}{n(\gamma-1)}g_{ij}\right|^2vu\,dV\\
&-2\sigma_K\beta_K\int_M\left[(\gamma-1)({\rm Ric}+Kg)(\nabla v,\nabla v )+((\gamma-1)\Delta v+\eta_K)^2\right]vu\,dV,\notag
\end{align}
where $a=\frac{n(\gamma-1)}{n(\gamma-1)+2}$, $\kappa=K\sup\limits_{M\times(0,T]}u^{\gamma-1}$, $\sigma_K=\left(\frac{e^{2{\kappa}t}-1}{2\kappa}\right)^{a}$, $\beta_K=\frac{\sinh(2\kappa t)}{2\kappa}$ and  $\eta_K=\frac{2a\kappa}{1-e^{-2\kappa t}}$.
Moreover, if ${\rm Ric}\ge-Kg$ for $K\ge0$, then  $\mathcal{W}_{K}(v,t)$ is monotone decreasing along the porous medium equation \eqref{PME}.
\end{theorem}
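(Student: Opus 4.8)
The strategy is to transfer everything to the pressure function $v$ and then run a Perelman-type argument with respect to the weighted measure $vu\,dV=\frac{\gamma}{\gamma-1}u^{\gamma}\,dV$. First I would record the elementary identities coming from $v=\frac{\gamma}{\gamma-1}u^{\gamma-1}$: namely $(\gamma-1)v=\gamma u^{\gamma-1}$, $\nabla u^{\gamma}=u\nabla v$ and $\nabla(vu)=\frac{\gamma}{\gamma-1}u\nabla v$. These turn \eqref{PME} into the continuity equation $\partial_t u={\rm div}(u\nabla v)$ and show that $v$ solves the pressure equation $\partial_t v=(\gamma-1)v\Delta v+|\nabla v|^2$. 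I would also note the structural relation $\dot\sigma_K/\sigma_K=\eta_K$, which rewrites the entropy as $\mathcal{W}_K=\gamma\sigma_K\beta_K F-(\sigma_K+\beta_K\dot\sigma_K)G$ with $F(t)=\int_M u|\nabla v|^2\,dV$ and $G(t)=\int_M vu\,dV$, and record the two auxiliary identities $\dot\eta_K=-\eta_K^2/a+2\kappa\eta_K$ and $1+\dot\beta_K+2\kappa\beta_K=2\beta_K\eta_K/a$, both checked directly from the closed forms of $\sigma_K,\beta_K,\eta_K$.

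Next come the two time derivatives. Integration by parts on the closed manifold gives $\int_M vu\,\Delta v\,dV=-\frac{\gamma}{\gamma-1}F$, hence $G'=-\gamma F$. For $F'$ I differentiate and substitute the pressure equation to get
\[
F'=\int_M u\Big[2(\gamma-1)\Delta v\,|\nabla v|^2+2(\gamma-1)v\langle\nabla v,\nabla\Delta v\rangle+\langle\nabla v,\nabla|\nabla v|^2\rangle\Big]\,dV .
\]
I would handle the middle integral in two ways — once via the Bochner formula $\frac12\Delta|\nabla v|^2=|\nabla^2 v|^2+\langle\nabla v,\nabla\Delta v\rangle+{\rm Ric}(\nabla v,\nabla v)$ and once by integrating $\Delta v$ by parts directly — and take the convex combination with weights $\frac1\gamma$ and $1-\frac1\gamma$. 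This particular weight is exactly what cancels the two ``pure gradient'' integrals $\int_M u\,\Delta v\,|\nabla v|^2\,dV$ and $\int_M\frac{u}{v}|\nabla v|^4\,dV$, which cannot survive in the final formula, leaving the clean identity
\[
F'=-\frac{2(\gamma-1)}{\gamma}\int_M\Big[|\nabla^2 v|^2+(\gamma-1)(\Delta v)^2+{\rm Ric}(\nabla v,\nabla v)\Big]vu\,dV .
\]

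With this in hand I would assemble $\frac{d}{dt}\mathcal{W}_K=\gamma\sigma_K\beta_K F'+C_F F+C_G G$, where $C_F=\gamma\big[2\dot\sigma_K\beta_K+\sigma_K(\dot\beta_K+1)\big]$ and $C_G=-(\sigma_K+\beta_K\dot\sigma_K)'$ (the term $G'=-\gamma F$ being folded into $C_F$). Splitting ${\rm Ric}=({\rm Ric}+Kg)-Kg$ inside $\gamma\sigma_K\beta_K F'$ produces the desired integral $-2(\gamma-1)\sigma_K\beta_K\int_M({\rm Ric}+Kg)(\nabla v,\nabla v)vu\,dV$ together with the error $2(\gamma-1)\sigma_K\beta_K K\int_M vu|\nabla v|^2\,dV$, which I bound by $2\gamma\kappa\sigma_K\beta_K F$ using $(\gamma-1)Kv=\gamma Ku^{\gamma-1}\le\gamma\kappa$ (finite since $M$ is closed and $u$ is smooth on $M\times(0,T]$). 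The $|\nabla^2 v|^2$, $(\Delta v)^2$ and $({\rm Ric}+Kg)$ contributions from $\gamma\sigma_K\beta_K F'$ already agree with the corresponding pieces of the claimed squares $(\gamma-1)|\nabla^2 v+\frac{\eta_K}{n(\gamma-1)}g|^2$ and $((\gamma-1)\Delta v+\eta_K)^2$; after re-expressing $\int_M vu\,\Delta v\,dV=-\frac{\gamma}{\gamma-1}F$, the remaining linear-in-$\Delta v$ and constant terms fit into those squares exactly once one verifies the two scalar identities
\[
2\gamma\kappa\sigma_K\beta_K+C_F=\frac{4\gamma}{\gamma-1}\sigma_K\beta_K\eta_K\Big(\frac1n+\gamma-1\Big),\qquad C_G=-2\sigma_K\beta_K\eta_K^2\,\frac{n(\gamma-1)+1}{n(\gamma-1)},
\]
and these hold for the explicit $\sigma_K,\beta_K,\eta_K$ thanks to the ODEs recorded above together with $\dot\sigma_K=\eta_K\sigma_K$. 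This establishes \eqref{WKDentropy}. Finally, all three square integrands on the right of \eqref{WKDentropy} are nonnegative — the first two pointwise, and $({\rm Ric}+Kg)(\nabla v,\nabla v)\ge0$ precisely under ${\rm Ric}\ge-Kg$ — so the right-hand side is $\le0$ and $\mathcal{W}_K(v,t)$ is monotone decreasing.

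The step I expect to be the main obstacle is the evaluation of $F'$ and the ensuing bookkeeping: choosing the convex-combination weight so the spurious $\Delta v\,|\nabla v|^2$ and $|\nabla v|^4/v$ terms drop out, and then verifying the two scalar identities above for the hyperbolic-type functions $\sigma_K,\beta_K,\eta_K$ — which are, in effect, reverse-engineered to make precisely these identities true.
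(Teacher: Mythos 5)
Your proposal is correct and follows essentially the same route as the paper: both compute the first and second time derivatives of the Nash-type quantity $\int_M vu\,dV$ (your $G'=-\gamma F$ and $F'$ formula are exactly the paper's Lemma \ref{pmeint}), absorb the Ricci term via $\kappa=K\sup u^{\gamma-1}$, and complete the squares using the explicit hyperbolic functions $\sigma_K,\beta_K,\eta_K$. The only cosmetic difference is that you verify the two scalar identities for the given closed forms (which do check out, using $\dot\sigma_K=\eta_K\sigma_K$, $\dot\eta_K=-\eta_K^2/a+2\kappa\eta_K$ and $1+\dot\beta_K+2\kappa\beta_K=2\beta_K\eta_K/a$), whereas the paper reverse-engineers those same functions by solving the ODE system \eqref{etabetak}.
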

\begin{remark}
When $K=0$, $\sigma_0=t^{a}$, $\beta_0=t$ and $\eta_0=\frac{a}t$, the entropy formula \eqref{WKDentropy} in Theorem \ref{KPMEentropy} reduced the result of  Lu-Ni-Vazquez-Villani in \cite{LNVV},
\begin{align}\label{PMEentropy2}
\frac{d}{dt}\mathcal{W}_0(v,t)=&-2(\gamma-1)t^{a+1}\int_M\left[\Big|\nabla_i\nabla_jv
+\frac{a}{n(\gamma-1)t}g_{ij}\Big|^2+ {\rm Ric}(\nabla v,\nabla v)\right]vu\,dV\notag\\
&-2t^{a+1}\int_M\left[(\gamma-1)\Delta v+\frac {a}t\right]^2 vu\,dV,
\end{align}
where
\begin{equation}\label{PMEentropy1}
\mathcal{W}_0(v,t)= t^{a+1}\int_M\left(\gamma\frac{|\nabla v|^2}{v}-\frac{a+1}t\right)vu\,dV.
\end{equation}
\end{remark}

A weighted Riemannian manifold is a Riemannian manifold $(M,g)$ with a smooth measure $d\mu:= e^{-f}\,{dV}$, denoted by $(M,g, d\mu)$, where $f$ is a smooth function on $M$. The weighted Riemannian manifold carries a natural analog of the Ricci curvature, that is, the $m$-Bakry-\'Emery Ricci curvature, which is defined as
$$
{\rm Ric}_f^m:={\rm Ric}+\nabla\nabla f-\frac{\nabla f\otimes \nabla f}{m-n},\quad (n\leq m\leq \infty).
$$
In particular, when $m=\infty$, ${\rm Ric}^{\infty}_f={\rm Ric}_f:={\rm Ric}+\nabla\nabla f$ is the classical Bakry--\'Emery Ricci curvature, which was introduced in the study of diffusion processes and functional inequalities including the Poincar\'e and the logarithmic Sobolev inequalities(see \cite{BGL} for a comprehensive introduction),  then it is extensively investigated in the theory of the Ricci flow (for example, the gradient shrinking Ricci soliton equation \eqref{soliton} is precisely ${\rm Ric}_f =\frac{1}{2\tau} g$);  when $m=n$ if and only if $f$ is a constant function. There is also a natural analog of the Laplacian, namely, the so-called weighted Laplacian, denoted by $\Delta_f=\Delta-\nabla f\cdot\nabla$, which is a self-adjoint operator in $L^2(M, d\mu)$.

A nature question is which contents can be generalized to the weighted manifolds and what are the advantages and applications for the weighted case. There are a lot of progress in this direction, for instance, gradient estimates and Liouville theorems for symmetric diffusion operators $\Delta_f$ \cite{LiXD1}, some comparison geometry for the Bakry-Emery Ricci tensor \cite{WW} etc..  In view of the innovation and importance of above-mentioned Perelman $\mathcal{W}$-entropy formulae,  one tried to get the entropy formula for the weighted case, the first work is established by X.-D. Li for the weighted heat equation $\partial_t u=\Delta_fu$ in \cite{LiXD2,LiXD3},
  \begin{align}\label{Lientropy}
\frac{d}{dt}\mathcal{W}_f(v,\tau)=&-2t\int_M\left(\Big|\nabla_i\nabla_jv-\frac1{2t}g_{ij}\Big|^2+{\rm Ric}^m_f(\nabla v,\nabla v)\right)u\,d\mu\notag\\
&-\frac{2t}{m-n}\int_M\Big(\nabla f\cdot\nabla v+\frac{m-n}{2t}\Big)^2u\,d\mu,
\end{align}
where the $\mathcal{W}$-entorpy is defined by
$$
\mathcal{W}_f(v,t):=\int_M\Big(t|\nabla v|^2+v-m\Big)u\,d\mu,\quad u = \frac{e^{-v}}{(4\pi t)^{m/2}}.
$$
In particular, if the $m$-Bakry-\'Emery Ricci curvature is nonnegative, then $\mathcal{W}_f(v,\tau)$ is monotone decreasing along the weighed heat equation. When $m=n$, $f=const.$, \eqref{Lientropy} reduces to \eqref{Nientropy}.

In \cite{LiLi}, when $n\le m\in \mathbb{N}$,  S. Li and X.-D. Li gave a new proof of the $\mathcal{W}$-entropy formula \eqref{Lientropy} by using of the warped product approach and a natural geometric interpretation for the third term in \eqref{Lientropy}. Moveover, they extended the $\mathcal{W}$-entropy formula to the weighted heat equation on the weighted compact Riemannian manifolds with time dependent metrics and potentials under satisfying the curvature dimensional condition $CD(K,m)$ for some negative constant $K$. In \cite{LiLi2}, S. Li and X.-D. Li obtained the $\mathcal{W}$-entropy formula  on compact Riemannian manifolds with $(K,m)$-super Perelman Ricci flow, where $K\in \mathbb{R}$ and $m\in [n,\infty]$ are two constants. In their paper \cite{LiLi4}, S. Li and X.-D. Li pointed out that there is an essential and deep connection between the definition of the $W$-entropy and the Hamilton type Harnack inequality on complete Riemnnian manifolds with the $CD(K,m)$-condition. Recently, they \cite{LiLi3} introduced Perelman's $\mathcal{W}$-entropy  along geodesic flow on the Wasserstein space over Riemannian manifolds and proved a rigidity theorem. For further related study, see \cite{LiLi3, LiLi5,LiLi6}.

For the nonlinear case,  combining the analogous methods in \cite{KoNi}, \cite{LNVV} and \cite{LiXD2}, Wang-Yang-Chen \cite{WYC} and Huang-Li \cite{HL} obtained the entropy monotonicity formulae for the weighted $p$-Laplacian heat equation and the weighted porous medium equation with nonnegative $m$-Bakry-\'Emery Ricci curvature respectively. In \cite{WYZ}, the author expanded the entropy formulae of Kotschwar-Ni \cite{KoNi} and Wang-Yang-Chen \cite{WYC} to the case  where the $m$-dimensional
Bakry-\'Emery Ricci curvature is bounded from below.

 Inspired by above works, we can obtain entropy monotonicity formula for positive solution to the weighted porous medium equation with $m$-Bakry-\'Emery Ricci curvature bounded below,  which is a natural generalization of  Theorem \ref{KPMEentropy}.

\begin{theorem}\label{WKPMEentropy}
Let~$(M,g,d\mu)$ be a closed weighted Riemannian manifold with  $m$-Barky-Emery-Ricci curvature bounded below,  i.e. ${\rm Ric}_f^m\ge-Kg$, $K\ge0$. Suppose~$u$~be a smooth positive solution to equation \eqref{WPME}
\begin{equation}\label{WPME}
\partial_tu=\Delta_f u^{\gamma}
\end{equation}
 and $v=\frac{\gamma}{\gamma-1}u^{\gamma-1}$ the pressure function. For any $\gamma>1$,
the  Perelman-type $\mathcal{W}$-entropy is defined by
\begin{equation}\label{WKentropy}
\mathcal{W}_K(v,t):=\bar{\sigma}_K\bar{\beta}_K\int_M\left[\gamma\frac{|\nabla v|^2}{v}-\left(\frac{1}{\bar{\beta}_K}+\frac{\dot{\bar{\sigma}}_K}{\bar{\sigma}_K}\right)\right]vu\,d\mu,
\end{equation}
Then we have
\begin{small}
\begin{align}\label{WKPMEent}
&\frac{d}{dt}\mathcal{W}_K(v,t)
\le-2\bar{\sigma}_K\bar{\beta}_K\int_M (\gamma-1) \left[\left|\nabla_i\nabla_jv+\frac{\bar{\eta}_K}{n(\gamma-1)}g_{ij}\right|^2+({\rm Ric}^m_f+Kg)(\nabla v,\nabla v )\right]vu\,d\mu\notag\\
&-2\bar{\sigma}_K\bar{\beta}_K\int_M\left[\Big((\gamma-1)\Delta_f v+\bar{\eta}_K\Big)^2+\frac{\gamma-1}{m-n}\left(\langle\nabla v,\nabla f\rangle-(m-n)\frac{\bar{\eta}_K}{m(\gamma-1)}\right)^2\right]vu\,d\mu.
\end{align}
\end{small}
where $\bar{a}=\frac{m(\gamma-1)}{m(\gamma-1)+2}$, ${\kappa}=K\sup\limits_{M\times(0,T]}u^{\gamma-1}$, $\bar{\sigma}_K=\left(\frac{e^{2{\kappa}t}-1}{2\kappa}\right)^{\bar{a}}$, $\bar{\beta}_K=\frac{\sinh(2{\kappa}t)}{2{\kappa}}$, $\bar{\eta}_K=\frac{2\bar{a}{\kappa}}{1-e^{-2{\kappa}t}}$.
Moreover, if ${\rm Ric}_f^m\ge-Kg$ for $K\ge0$, then  entropy $\mathcal{W}_{K}(v,t)$ is monotone decreasing along the weighted porous medium equation \eqref{WPME}.
\end{theorem}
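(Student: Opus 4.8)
The plan is to run the argument behind Theorem~\ref{KPMEentropy} with the Laplace--Beltrami operator replaced throughout by the Witten Laplacian $\Delta_f$ and the classical Bochner formula replaced by the Bakry--\'Emery one; the single genuinely new algebraic ingredient is the dimensional-reduction step in which, after writing ${\rm Ric}_f={\rm Ric}^m_f+\frac{1}{m-n}\nabla f\otimes\nabla f$ and $\Delta v=\Delta_f v+\langle\nabla v,\nabla f\rangle$, the Hessian/Laplacian contribution splits into the $n$-dimensional square $\big|\nabla_i\nabla_j v+\frac{\bar\eta_K}{n(\gamma-1)}g_{ij}\big|^2$, the square $\big((\gamma-1)\Delta_f v+\bar\eta_K\big)^2$, and the extra square in $\langle\nabla v,\nabla f\rangle$ appearing on the right of \eqref{WKPMEent}.

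First I would pass to the pressure $v=\frac{\gamma}{\gamma-1}u^{\gamma-1}$, which turns \eqref{WPME} into $\partial_t v=(\gamma-1)\,v\,\Delta_f v+|\nabla v|^2$. On the closed manifold the maximum principle gives $\max_M u(\cdot,t)\le\max_M u(\cdot,0)$, so $\kappa=K\sup_{M\times(0,T]}u^{\gamma-1}$ is a finite constant, the functions $\bar\sigma_K,\bar\beta_K,\bar\eta_K$ are smooth on $(0,T]$, and one checks directly the ODE relations $\dot{\bar\sigma}_K/\bar\sigma_K=\bar\eta_K$ and $\dot{\bar\beta}_K=\cosh(2\kappa t)=\sqrt{1+4\kappa^2\bar\beta_K^{\,2}}$, which will be used repeatedly; I also note that $\int_M u\,d\mu$ is conserved because $\Delta_f$ is self-adjoint in $L^2(M,d\mu)$.

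Next, using $vu=\frac{\gamma}{\gamma-1}u^{\gamma}$ and $\gamma\frac{|\nabla v|^2}{v}\cdot vu=\gamma|\nabla v|^2u$, I would rewrite $\mathcal{W}_K(v,t)$ as $\bar\sigma_K\bar\beta_K$ times $\gamma\int_M|\nabla v|^2u\,d\mu-\big(\frac1{\bar\beta_K}+\bar\eta_K\big)\frac{\gamma}{\gamma-1}\int_M u^{\gamma}\,d\mu$, differentiate in $t$ using the pressure equation and integration by parts against $d\mu$, and organize the computation as in the proof of Theorem~\ref{KPMEentropy}, around the auxiliary Aronson--B\'enilan quantity built from $\Delta_f v$ and $|\nabla v|^2/v$. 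Into the resulting integrand I would substitute the weighted Bochner formula $\tfrac{1}{2}\Delta_f|\nabla v|^2=|\nabla\nabla v|^2+\langle\nabla v,\nabla\Delta_f v\rangle+{\rm Ric}_f(\nabla v,\nabla v)$ and then perform the dimensional-reduction splitting above, adding and subtracting $Kg$ to bring in ${\rm Ric}^m_f\ge-Kg$ so that the curvature term keeps the correct sign. The leftover terms that depend only on $t$ (multiples of $\int_M u^{\gamma}\,d\mu$ and of the integral of the Aronson--B\'enilan quantity against $u^{\gamma}\,d\mu$) must cancel, and this requirement is exactly what pins down the explicit form of $\bar\sigma_K,\bar\beta_K,\bar\eta_K$; the cancellation is verified from the ODE relations above. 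The one place an inequality enters rather than an identity is a nonnegative remainder proportional to $(\kappa-Ku^{\gamma-1})$, which is dropped using $Ku^{\gamma-1}\le\kappa$; this yields \eqref{WKPMEent}, and the monotonicity is then immediate since under ${\rm Ric}^m_f\ge-Kg$ the right-hand side of \eqref{WKPMEent} is manifestly $\le0$.

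I expect the main obstacle to be precisely this bookkeeping: tracking the curvature-error and time-coefficient-error terms, checking that the purely $t$-dependent remainder cancels against the stated $\bar\sigma_K,\bar\beta_K,\bar\eta_K$, and keeping all signs straight when the identity of the $\kappa=0$ case (the weighted analog of the Remark) degrades to the inequality \eqref{WKPMEent}. For integer $m\ge n$ there is a shortcut that sidesteps much of this: lift $u$ to the warped product $\widetilde M=M\times_{\phi}N^{m-n}$ with $\phi=e^{-f/(m-n)}$ and $N^{m-n}$ any closed $(m-n)$-manifold, for which $\Delta_{\widetilde M}$ restricts, on functions pulled back from $M$, to $\Delta_f$ and the horizontal Ricci curvature of $\widetilde M$ equals ${\rm Ric}^m_f$; the lift of $u$ then solves the porous medium equation on the $m$-dimensional closed manifold $\widetilde M$, all the entropy integrals reduce to integrals over $(M,d\mu)$, and running the computation of Theorem~\ref{KPMEentropy} on $\widetilde M$ (where, for solutions pulled back from $M$, only the horizontal Ricci curvature is seen) produces \eqref{WKPMEent} after re-collecting lower-order terms, the vertical block of the shifted-Hessian square on $\widetilde M$ being exactly the extra $\langle\nabla v,\nabla f\rangle$-square. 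The range $m\in[n,\infty]$ still needs the direct computation, with the term $\frac{\gamma-1}{m-n}(\,\cdot\,)^2$ understood to vanish as $m\to\infty$.
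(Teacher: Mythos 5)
Your proposal is correct, and in fact your ``shortcut'' for integer $m>n$ is precisely the proof the paper gives: the author lifts the solution to the warped product $\overline{M}=M\times N^{m-n}$ with metric $g_M+e^{-2f/(m-n)}g_N$, observes that $\Delta_{\overline{M}}$ restricts to $\Delta_f$ on pulled-back functions, that the horizontal Ricci curvature of $\overline{M}$ is ${\rm Ric}_f^m$, and that $dV_{\overline{M}}=d\mu\otimes dV_N$ with $V_N(N)=1$, then applies Theorem \ref{KPMEentropy} on the $m$-dimensional manifold $\overline{M}$ and splits the shifted-Hessian square into its horizontal block plus the vertical block $\frac{1}{m-n}\big|\langle\nabla v,\nabla f\rangle-\frac{(m-n)\bar\eta_K}{m(\gamma-1)}\big|^2$ --- exactly the extra square you describe. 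Your primary route (redoing the computation directly with $\Delta_f$, the Bakry--\'Emery Bochner formula, and the three-square decomposition obtained by writing ${\rm Ric}_f={\rm Ric}_f^m+\frac{\nabla f\otimes\nabla f}{m-n}$ and $\Delta v=\Delta_f v+\langle\nabla v,\nabla f\rangle$) is not the paper's argument but is the standard alternative (it is how Li's formula \eqref{Lientropy} and the Huang--Li entropy formula are proved), and it has the genuine advantage you note of covering all $m\in[n,\infty]$ rather than only integer $m>n$; the key trace inequality it rests on, $|\nabla\nabla v|^2+\frac{1}{m-n}\langle\nabla v,\nabla f\rangle^2\ge\frac{1}{m}(\Delta_f v)^2$, is exactly what replaces $|\nabla\nabla v|^2\ge\frac1n(\Delta v)^2$ and forces $a$ to become $\bar a$. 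One bookkeeping point: carrying out your completion of squares (or the paper's own identity \eqref{WHessian}) produces the constant $\frac{\bar\eta_K}{m(\gamma-1)}$ inside the Hessian square, not $\frac{\bar\eta_K}{n(\gamma-1)}$; the $n$ appearing in the statement of \eqref{WKPMEent}, which you copied, is a typo in the theorem, so do not try to make your decomposition match it.
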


In the second part of this paper, we study the differential Harnack inequality for the porous medium equation. Such inequality was first proved by  Li and Yau \cite{LY} for solutions to the heat equation on Riemannian manifolds, i.e. if $u$ is a positive solution to $\partial_tu=\Delta u$ on $M$ with nonnegative Ricci curvature, then
\begin{equation}\label{LY}
\frac{|\nabla u|^2}{u^2}-\frac{u_t}{u}\le\frac{n}{2t}.
\end{equation}
Later on, it has been extensively investigated in other geometric evolution equations, such as Hamilton's estimates for the Ricci flow and the mean curvature flow, the corresponding results for the K\"ahler Ricci flow and the Gauss curvature flow were proved by H. Cao and B. Chow, Perelman's differential Harnack inequality \eqref{PLYH} for the conjugate heat equaiton under Ricci flow, etc.. For more progress in this direction, see the survey \cite{Ni} and references therein.

It is a long time question of proving the sharp Li-Yau Harnack inequality for positive solution of the heat equation on Riemannian manifolds with negative Ricci curvature bound (see P.393 in \cite{CLN}). There are many works in this direction. Let ${\rm Ric}\ge-K$ for some $K>0$, the original result of Li-Yau \cite{LY} is,
$$
\frac{|\nabla u|^2}{u^2}-\alpha\frac{u_t}{u}\le\frac{\alpha^2}{2(\alpha-1)}nK+\alpha^2\frac{n}{2t},
$$
where $\alpha$ is a constant and  $\alpha>1$.
In \cite{Ham93}, Hamilton  proved
$$
\frac{|\nabla u|^2}{u^2}-e^{2Kt}\frac{u_t}{u}\le e^{4Kt}\frac{n}{2t}.
$$
Recently, Li-Xu \cite{LX} obtained some new Li-Yau type estimates,
\begin{align}\label{LiXu}
\frac{|\nabla u|^2}{u^2}-\left(1+\frac{2}3Kt\right)\frac{u_t}{u}\le&\frac{n}{2t}+\frac{nK}{2}\Big(1+\frac13Kt\Big),\\
\frac{|\nabla u|^2}{u^2}-\left(1+\frac{\sinh (Kt)\cosh (Kt)-Kt}{\sinh^2( Kt)}\right)\frac{u_t}{u}\le&\frac{nK}{2}\Big(1+\coth(Kt)\Big).\notag
\end{align}
In \cite{LiLi,LiLi3}, S. Li and X.-D. Li proved an analog of the Harnack inequality \eqref{LiXu} for the weighted heat equation on weighted complete Riemannian manifolds with  ${\rm Ric}^m_f\ge-K$ . B.Qian \cite{QianB} gave a further generalization under the  proper assumptions of $\alpha(t)$ and $\varphi(t)$,
\begin{equation}\label{QLY}
\frac{|\nabla u|^2}{u^2}-\alpha(t)\frac{u_t}{u}\le\varphi(t).
\end{equation}
There are some results on Li-Yau, Hamilton and Li-Xu type differential Harnack estimates for the porous medium equation \cite{LNVV, HHL, WC1}, the $p$-heat equation \cite{KoNi,WYC,WYZ} and the doubly nonlinear diffusion equation \cite{WC2} on Riemannian manifolds with Ricci curvature bounded below.

In the second part of this paper, we obtain  Qian type differential Harnack estimate for the porous medium  equation on the closed Riemannian manifold with Ricci curvature bounded below, which can also be generalized to the case of the weighted Riemanian manifolds with the $m$-Bakry-\,Emery Ricci curvature bound below.

Now we first give two assumptions, let $\sigma(t)\in C^1(M)$ and satisfy (See \cite{QianB}).
\begin{description}
  \item[(A1)] For a $t>0$, $\sigma(t)>0$, $\sigma'(t)>0$, $\lim\limits_{t\to0}\sigma(t)=0$ and $\lim\limits_{t\to0}\frac{\sigma(t)}{\sigma'(t)}=0$;
  \item[(A2)] For any $T>0$, $\frac{(\sigma')^2}{\sigma}$ is continuous and integrable on the interval $[0,T)$.
\end{description}

\begin{theorem}\label{pmeGK}
Let $(M^n,g)$ be a closed Riemannian manifold with ${\rm Ric}\ge-Kg$ for $K\ge0$. Suppose  $u(x,t)$ be a positive solution to equation \eqref{PME} and $v$ the  pressure function. For any $\gamma>1$, we have
\begin{equation}\label{PMELYH1}
\frac{|\nabla v|^2}{v}-\alpha(t)\frac{v_t}{v}\le\varphi(t).
\end{equation}
Here
\begin{align}\label{PMEalphavarphi}
\alpha(t)=1+\frac{2{\kappa}}{\sigma}\int^t_0\sigma(s)ds,\quad
\varphi(t)={\kappa}a
+\frac{{\kappa}^2a}{\sigma}\int_0^t\sigma(s)ds+\frac{a}{4\sigma}\int^t_0\frac{(\sigma'(s))^2}{\sigma(s)}ds,
\end{align}
and
$\sigma(t)$ satisfies the assumptions $\mathbf{(A1)}$ and $\mathbf{(A2)}$,  $a=\frac{n(\gamma-1)}{n(\gamma-1)+2}$ and  ${\kappa}=K\sup\limits_{M\times(0,T]}u^{\gamma-1}$.
\end{theorem}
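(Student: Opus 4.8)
The plan is to prove the equivalent statement $\Phi\le 0$ on $M\times(0,T]$, where
\[
\Phi:=|\nabla v|^2-\alpha(t)\,v_t-\varphi(t)\,v ,
\]
by the parabolic maximum principle. Recall that the pressure $v=\frac{\gamma}{\gamma-1}u^{\gamma-1}$ satisfies the pressure equation $v_t=(\gamma-1)v\Delta v+|\nabla v|^2$, and introduce the linearized operator
\[
\mathcal{L}w:=(\gamma-1)v\,\Delta w+2\langle\nabla v,\nabla w\rangle ,
\]
which satisfies $(\partial_t-\mathcal{L})v=-|\nabla v|^2$ and, at an interior maximum of a function, reduces to $(\gamma-1)v\Delta(\cdot)$, so the maximum principle applies because $v>0$.

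The key computation is the evolution of $\Phi$. Differentiating the pressure equation in $t$ gives $(\partial_t-\mathcal{L})v_t=(\gamma-1)v_t\Delta v$, and the Bochner formula together with the pressure equation gives, after the third-order terms $\langle\nabla v,\nabla\Delta v\rangle$ cancel (this is where the factor $2$ in the drift of $\mathcal{L}$ is used),
\[
(\partial_t-\mathcal{L})|\nabla v|^2=2(\gamma-1)(\Delta v)|\nabla v|^2-2(\gamma-1)v\,|\nabla^2 v|^2-2(\gamma-1)v\,{\rm Ric}(\nabla v,\nabla v).
\]
Combining these, substituting $(\gamma-1)v\Delta v=v_t-|\nabla v|^2$, using ${\rm Ric}\ge-Kg$ together with the uniform bound $Ku^{\gamma-1}\le\kappa$ for the Ricci term, and using the elementary inequality $|\nabla^2 v|^2\ge(\Delta v)^2/n=(v_t-|\nabla v|^2)^2/(n(\gamma-1)^2v^2)$, one arrives at the pointwise inequality
\[
(\partial_t-\mathcal{L})\Phi\ \le\ \frac{2|\nabla v|^2(v_t-|\nabla v|^2)}{v}-\frac{2(v_t-|\nabla v|^2)^2}{n(\gamma-1)v}-\frac{\alpha\,v_t(v_t-|\nabla v|^2)}{v}+2(\gamma-1)Kv|\nabla v|^2-\alpha' v_t+\varphi|\nabla v|^2-\varphi' v .
\]

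Next I run the maximum principle on $M\times[\varepsilon,T]$. Since $\Phi\le C(t)-\varphi(t)\inf_M v$ with $v$ bounded below by a positive constant and $|\nabla v|^2,v_t$ bounded near $t=0$, assumption \textbf{(A1)} (which yields $\varphi(t)\to+\infty$ and $\alpha(t)\to1$ as $t\to0^+$) shows $\Phi<0$ for small $t$; hence any positive maximum of $\Phi$ is attained at an interior point $(x_0,t_0)$, $t_0>0$, where $\nabla\Phi=0$, $\Delta\Phi\le0$, $\partial_t\Phi\ge0$, so $(\partial_t-\mathcal{L})\Phi\ge0$ there. Eliminating $v_t$ in favour of $\Phi$ via $v_t-|\nabla v|^2=\frac1\alpha\big((1-\alpha)|\nabla v|^2-\varphi v-\Phi\big)$ turns the last inequality into
\[
(\partial_t-\mathcal{L})\Phi\ \le\ \frac{A(t)}{v}\,\Phi^2+B\,\Phi+D,\qquad A(t)=-\frac1{\alpha^2}\Big(\alpha+\frac{2}{n(\gamma-1)}\Big)<0 ,
\]
where $B=B(x,t)$ is bounded above by a function of $t$ alone (its coefficient of $|\nabla v|^2/v$ equals $-1+\tfrac{4(1-\alpha)}{\alpha^2 n(\gamma-1)}\le-1$), and $D=D(x,t)$ is a quadratic form $c_2\frac{|\nabla v|^4}{v}+c_1|\nabla v|^2+c_0 v$ with $c_2\le0$. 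One then checks that the explicit $\alpha,\varphi$ in \eqref{PMEalphavarphi} are precisely the solutions of the Riccati-type equations
\[
\alpha'+(\alpha-1)\frac{\sigma'}{\sigma}=2\kappa,\qquad
\varphi'+(\varphi-\kappa a)\frac{\sigma'}{\sigma}=\kappa^2 a+\frac{a}{4}\Big(\frac{\sigma'}{\sigma}\Big)^2,\qquad a=\frac{n(\gamma-1)}{n(\gamma-1)+2},
\]
and that under these equations $c_0\le0$ and $c_1^2\le4c_2c_0$, so that $D\le0$ for all $x,t$ (for $K=0$ this is the classical identity $\varphi'+\varphi^2/a=\frac1a(\tfrac a2\tfrac{\sigma'}{\sigma}-\varphi)^2\ge0$, recovering Aronson--B\'enilan). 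Since $A<0$, $D\le0$, and $B$ is bounded above by some constant $C_0$ on $(0,T]$, the function $\Psi=e^{-C_0 t}\Phi$ satisfies $(\partial_t-\mathcal{L})\Psi\le e^{-C_0t}\big(\tfrac Av\Phi^2+D\big)\le0$ wherever $\Phi\ge0$; since $\Psi<0$ near $t=0$, the maximum principle forces $\Psi\le0$, hence $\Phi\le0$, on $M\times(0,T]$, which is \eqref{PMELYH1} after letting $\varepsilon\to0$.

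The main obstacle is the bookkeeping in the last step: reorganizing the evolution inequality into the form $\tfrac Av\Phi^2+B\Phi+D$ and, above all, verifying that $D\le0$ holds \emph{precisely} under the Riccati equations for $\alpha$ and $\varphi$ — which in turn requires checking that the closed forms \eqref{PMEalphavarphi} solve those equations with the correct behaviour as $t\to0^+$, and this is exactly where \textbf{(A1)} and \textbf{(A2)} enter (respectively $\sigma/\sigma'\to0$, giving $\alpha(0)=1$ and $\varphi(0^+)=+\infty$, and the integrability of $(\sigma')^2/\sigma$, making $\varphi$ finite for $t>0$). A secondary technical point is the possible degeneration of $v$ near $t=0$, handled by the truncation to $M\times[\varepsilon,T]$ together with the a priori boundedness of $u$ (also needed for $\kappa<\infty$).
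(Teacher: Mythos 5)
Your overall strategy coincides with the paper's: a parabolic maximum principle applied to the Harnack quantity, fed by the same evolution equations (Lemma \ref{pmeBochner}), the trace inequality $|\nabla\nabla v|^2\ge(\Delta v)^2/n$ and the Ricci lower bound. The difference is in how the decisive algebra is organized. The paper works with $F_\alpha=\alpha\frac{v_t}{v}-\frac{|\nabla v|^2}{v}+\varphi$ (which is $-\Phi/v$ in your notation), completes the square in $(\gamma-1)\Delta v+\eta(t)$, and chooses $\eta,\sigma$ through the system \eqref{sigeta} so that what remains is exactly $-\frac{\sigma'}{\sigma}F_\alpha$ plus the manifestly nonnegative terms $\frac{(\gamma-1)^2}{a}(\Delta v+\eta)^2+(\alpha-1)(v_t/v)^2$; it then applies the maximum principle to $\sigma F_\alpha$, which tends to $0$ as $t\to0^+$ by (A1)--(A2), so neither the $\varepsilon$-truncation nor the $e^{-C_0t}$ damping is needed. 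You instead expand in powers of $\Phi$, and the whole burden falls on the asserted inequality $D\le 0$, i.e.\ $c_0\le0$ and $c_1^2\le4c_2c_0$: that discriminant condition is precisely the content of the paper's completed square, and you verify it only for $K=0$. The pieces I can check are correct ($A=-\alpha^{-2}\bigl(\alpha+\tfrac{2}{n(\gamma-1)}\bigr)<0$, $c_2=\frac{(1-\alpha)(\alpha-1+a)}{a\alpha^2}\le0$, the coefficient $-1+\frac{4(1-\alpha)}{\alpha^2 n(\gamma-1)}\le-1$ of $|\nabla v|^2/v$ in $B$, and the $K=0$ identity $\varphi'+\varphi^2/a=\frac1a(\tfrac a2\tfrac{\sigma'}{\sigma}-\varphi)^2$), so the skeleton is sound. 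One concrete warning for the $K>0$ verification you defer: the Ricci term is bounded by $2(\gamma-1)Kv|\nabla v|^2\le 2\gamma\kappa|\nabla v|^2$ (since $(\gamma-1)v=\gamma u^{\gamma-1}$), so the Riccati equation that absorbs it is $\alpha'+(\alpha-1)\frac{\sigma'}{\sigma}=2\gamma\kappa$, not $2\kappa$ as you wrote; with $2\kappa$ an unabsorbed positive multiple of $|\nabla v|^2$ survives in $c_1$ and $D\le0$ does not close. The paper's own proof in fact derives $(\sigma\alpha)'=\sigma'+2\gamma\kappa\sigma$, which conflicts with the $2\kappa$ in the displayed formula for $\alpha(t)$ in the theorem, so you have inherited a $\gamma$-discrepancy that already exists between the paper's statement and its proof; your argument should be run with the constants the computation actually produces.
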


\begin{remark}
\begin{enumerate}
      \item When $K=0$, $\alpha(t)=1$, $\sigma(t)=t^2$ and $\varphi(t)=\frac at$, the estimate \eqref{PMELYH1} in Theorem \ref{pmeGK} reduces to the Aronson-Benilan's estimate in \cite{LNVV}, i.e.
          $$
          \frac{|\nabla v|^2}{v}-\frac{v_t}{v}\le\frac at.
          $$
      \item When $K>0$, $\sigma(t)=t^2$,  $\alpha(t)=1+\frac{2\kappa t}3$ and $\varphi(t)=\frac at+a\kappa\left(1+\frac{\kappa t}3\right)$, the estimate \eqref{PMELYH1} in Theorem \ref{pmeGK} reduces to the Li-Xu type estimate in \cite{HHL}
          $$
          \frac{|\nabla v|^2}{v}-\left(1+\frac{2\kappa t}3\right)\frac{v_t}{v}\le\frac at+a\kappa\left(1+\frac{\kappa t}3\right).
          $$
      \item When $K>0$, $\sigma(t)=\sinh^2(\kappa t)$,  $\alpha(t)=1+\frac{\sinh(\kappa t)\cosh(\kappa t)-\kappa t}{\sinh^2(\kappa t)}$ and $\varphi(t)=a\kappa(1+\coth(\kappa t))$, the estimate \eqref{PMELYH1} in Theorem \ref{pmeGK} reduces to another Li-Xu type estimate in \cite{HHL}
          $$
          \frac{|\nabla v|^2}{v}-\left(1+\frac{\sinh(\kappa t)\cosh(\kappa t)-\kappa t}{\sinh^2(\kappa t)}\right)\frac{v_t}{v}\le a\kappa(1+\coth(\kappa t)).
          $$
         \item All of above gradient estimates are valid for the weighted case by similar method.
    \end{enumerate}
\end{remark}

There are some applications for differential Harnack estimates, including Harnack inequalities and Laplacian estimates.
Integrating the estimate \eqref{PMELYH1} in Theorem \ref{pmeGK} along a
minimizing path between two points, we can get the Harnack inequalities for positive solutions to the porous medium equation \eqref{PME}.

\begin{corollary}\label{Harnack}
For any $(x_1,t_1)$ and $(x_2,t_2)$ with $0<t_1\le t_2<T$, we have
$$
v(x_1,t_1)-v(x_2,t_2)\le v_{max}\int^{t_2}_{t_1}\frac{\varphi(t)}{\alpha(t)}dt
+\frac{1}{4}\frac{d(x_2,x_1)^{2}}{(t_2-t_1)^{2}}\int^{t_2}_{t_1}\alpha(t)dt
$$
and
$$
\frac{v(x_1,t_1)}{v(x_2,t_2)}\le\exp\left(\int^{t_2}_{t_1}\frac{\varphi(t)}{\alpha(t)}dt
+\frac{1}{4}\frac1{v_{max}}\frac{d(x_2,x_1)^{2}}{(t_2-t_1)^{2}}\int^{t_2}_{t_1}\alpha^{\frac1{p-1}}(t)dt\right),
$$
where $v_{max}=\sup_{M\times[0,T)}v$.
\end{corollary}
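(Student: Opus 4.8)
The plan is to integrate the pointwise estimate \eqref{PMELYH1} of Theorem~\ref{pmeGK} along a curve in space-time joining the two points, exactly as Li--Yau do for the heat equation. First note the structural facts that make this legitimate: since $u>0$ the pressure $v=\frac{\gamma}{\gamma-1}u^{\gamma-1}$ is strictly positive, so dividing by $v$ and taking $\log v$ are allowed; $\alpha(t)=1+\frac{2\kappa}{\sigma}\int_0^t\sigma(s)\,ds\ge 1>0$ because $\sigma>0$ by $\mathbf{(A1)}$, so dividing by $\alpha$ is allowed and $\frac1\alpha|\nabla v|^2\ge 0$; $\varphi(t)\ge 0$ since $\kappa,a\ge 0$ and $\sigma>0$; and $v_{max}=\sup_{M\times[0,T)}v<\infty$ since $M$ is closed, $u$ is smooth, and $\kappa=K\sup u^{\gamma-1}$ is finite by hypothesis. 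Fix a constant-speed minimizing geodesic $\xi\colon[t_1,t_2]\to M$ with $\xi(t_1)=x_1$, $\xi(t_2)=x_2$, so that $|\dot\xi(s)|\equiv d(x_1,x_2)/(t_2-t_1)$, and follow the space-time path $s\mapsto(\xi(s),s)$.

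For the additive Harnack inequality, put $\phi(s)=v(\xi(s),s)$. Multiplying \eqref{PMELYH1} by $v>0$ and using $v\le v_{max}$ and $\varphi\ge 0$ gives $v_t\ge\frac1{\alpha}|\nabla v|^2-\frac{\varphi}{\alpha}v_{max}$, hence
\[
\phi'(s)=\langle\nabla v,\dot\xi\rangle+v_t\ \ge\ \langle\nabla v,\dot\xi\rangle+\frac1{\alpha}|\nabla v|^2-\frac{\varphi}{\alpha}v_{max}.
\]
Young's inequality $\langle\nabla v,\dot\xi\rangle\ge-|\nabla v|\,|\dot\xi|\ge-\frac1{\alpha}|\nabla v|^2-\frac{\alpha}{4}|\dot\xi|^2$ cancels the $|\nabla v|^2$ terms, so $\phi'(s)\ge-\frac{\alpha(s)}{4}|\dot\xi(s)|^2-\frac{\varphi(s)}{\alpha(s)}v_{max}$. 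Integrating over $[t_1,t_2]$ and inserting $|\dot\xi|^2=d(x_1,x_2)^2/(t_2-t_1)^2$ yields
\[
v(x_1,t_1)-v(x_2,t_2)=\phi(t_1)-\phi(t_2)\le v_{max}\int_{t_1}^{t_2}\frac{\varphi(t)}{\alpha(t)}\,dt+\frac14\,\frac{d(x_2,x_1)^2}{(t_2-t_1)^2}\int_{t_1}^{t_2}\alpha(t)\,dt,
\]
which is the first inequality.

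For the multiplicative inequality I would run the same computation with $\psi(s)=\log v(\xi(s),s)$: here \eqref{PMELYH1} reads $\frac{v_t}{v}\ge\frac1{\alpha}\frac{|\nabla v|^2}{v^2}-\frac{\varphi}{\alpha}$, and Young's inequality gives $\frac{\langle\nabla v,\dot\xi\rangle}{v}\ge-\frac{|\nabla v|}{v}|\dot\xi|\ge-\frac1{\alpha}\frac{|\nabla v|^2}{v^2}-\frac{\alpha}{4}|\dot\xi|^2$, so that $\psi'(s)\ge-\frac{\alpha(s)}{4}|\dot\xi(s)|^2-\frac{\varphi(s)}{\alpha(s)}$; integrating and exponentiating gives the claimed bound on $v(x_1,t_1)/v(x_2,t_2)$. (Equivalently, one can divide the additive estimate by $v(x_2,t_2)$ and use $1+x\le e^x$.) The precise coefficient multiplying the distance term in the exponential form depends only on whether the connecting path is taken linear in $t$ or is reparametrized $t=t(s)$ so as to balance the two integrals by a Cauchy--Schwarz optimization in $t(\cdot)$; this reparametrization is the one genuine choice in the argument.

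There is no serious obstacle here: the whole proof is a one-dimensional comparison along the chosen path, and the cancellation of $|\nabla v|^2$ via Young's inequality is exactly what the coefficient $\frac1{\alpha}$ in \eqref{PMELYH1} is designed for. The only points that need a word of care are the positivity of $\alpha$ and of $v$ (for the divisions and logarithms) and the finiteness of $v_{max}$ on $[0,T)$, all of which were noted at the outset.
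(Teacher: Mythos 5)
Your first inequality is proved exactly as in the paper: integrate $v_t+\langle\nabla v,\dot\xi\rangle$ along a constant-speed minimizing geodesic, use \eqref{PMELYH1} in the form $v_t\ge\frac1{\alpha}|\nabla v|^2-\frac{\varphi}{\alpha}v$, and cancel the gradient terms with Young's inequality. That part is correct and coincides with the paper's argument.

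The second inequality contains a genuine gap. You quote \eqref{PMELYH1} as $\frac{v_t}{v}\ge\frac1{\alpha}\frac{|\nabla v|^2}{v^2}-\frac{\varphi}{\alpha}$; this is the heat-equation normalization, not what Theorem \ref{pmeGK} asserts. For the pressure function of the porous medium equation the estimate is $\frac{|\nabla v|^2}{v}-\alpha\frac{v_t}{v}\le\varphi$, i.e. $\frac{v_t}{v}\ge\frac1{\alpha}\frac{|\nabla v|^2}{v}-\frac{\varphi}{\alpha}$, with a single power of $v$ in the denominator. With the correct form, the Young splitting that cancels $\frac1{\alpha}\frac{|\nabla v|^2}{v}$ is $\frac{|\nabla v|}{v}|\dot\xi|\le\frac1{\alpha}\frac{|\nabla v|^2}{v}+\frac{\alpha}{4}\frac{|\dot\xi|^2}{v}$, which leaves a kinetic term $\frac{\alpha}{4v}|\dot\xi|^2$; that extra $\frac1v$ is precisely the origin of the factor $\frac1{v_{max}}$ in the stated exponent. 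Your computation instead produces $\frac{\alpha}{4}|\dot\xi|^2$ with no $v$, so it does not yield the inequality as stated, and your fallback via $1+x\le e^{x}$ does not either, since dividing the additive bound by $v(x_2,t_2)$ leaves a factor $v_{max}/v(x_2,t_2)$ on the $\varphi/\alpha$ term. In fairness, the replacement $\frac1v\mapsto\frac1{v_{max}}$ made at this point (in the paper as well) goes in the wrong direction for an upper bound on $\frac{\alpha}{4v}|\dot\xi|^2$ (one would need $\inf v$ rather than $\sup v$), and the exponent $\alpha^{\frac1{p-1}}$ in the statement is itself a misprint for $\alpha$; but the misreading of \eqref{PMELYH1} is an error in your argument independent of these defects in the source.
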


\begin{corollary}\label{pfLaEst}
Under the same assumptions as in Theorem \ref{pmeGK}, define $\beta(t)$ by $\frac{\alpha(t)-1}{\alpha(t)}=(\gamma-1)(\beta(t)-1)$ for $\alpha(t)>1$ and such that $1<\beta(t)<\frac{\gamma}{\gamma-1}$. Assume that ${\rm Ric}\ge-K$ for some $K\ge0$, then
\begin{equation}\label{pfLaE}
\Delta(v^{\beta})\ge-\frac{\beta}{\alpha(\gamma-1) } v_{max}^{\beta-1}\varphi(t),
\end{equation}
where $v_{max}:=\sup_{M\times[0,T]}v$.
\end{corollary}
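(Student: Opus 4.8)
The plan is to read the estimate off directly from the differential Harnack inequality \eqref{PMELYH1} of Theorem \ref{pmeGK} after rewriting $\Delta(v^{\beta})$ by means of the pressure equation. Recall that if $u$ is a positive solution of \eqref{PME}, then the pressure $v=\frac{\gamma}{\gamma-1}u^{\gamma-1}$ satisfies the evolution equation $v_t=(\gamma-1)\,v\,\Delta v+|\nabla v|^2$, so that, wherever $v>0$,
\[
\Delta v=\frac{v_t-|\nabla v|^2}{(\gamma-1)\,v}.
\]

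First I would expand, by the chain rule, $\Delta(v^{\beta})=\beta v^{\beta-1}\Delta v+\beta(\beta-1)v^{\beta-2}|\nabla v|^2$ and substitute the formula above for $\Delta v$; this gives
\[
\Delta(v^{\beta})=\beta v^{\beta-2}\left[\frac{v_t}{\gamma-1}+\Big((\beta-1)-\frac1{\gamma-1}\Big)|\nabla v|^2\right].
\]
The definition of $\beta(t)$ is then used to eliminate the awkward coefficient: from $\frac{\alpha-1}{\alpha}=(\gamma-1)(\beta-1)$ one gets $\beta-1=\frac{\alpha-1}{\alpha(\gamma-1)}$, hence $(\beta-1)-\frac1{\gamma-1}=\frac1{\gamma-1}\big(\frac{\alpha-1}{\alpha}-1\big)=-\frac1{\alpha(\gamma-1)}$, and therefore
\[
\Delta(v^{\beta})=\frac{\beta\,v^{\beta-2}}{\alpha(\gamma-1)}\Big(\alpha(t)\,v_t-|\nabla v|^2\Big).
\]

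To finish, I would invoke \eqref{PMELYH1}, which after multiplying by $v>0$ rearranges to $\alpha(t)\,v_t-|\nabla v|^2\ge-\varphi(t)\,v$. Since the prefactor $\beta v^{\beta-2}/(\alpha(\gamma-1))$ is positive, this yields $\Delta(v^{\beta})\ge-\frac{\beta v^{\beta-1}}{\alpha(\gamma-1)}\varphi(t)$. Because $\beta>1$ (equivalently $\alpha>1$) and $\varphi(t)>0$, one has $v^{\beta-1}\le v_{max}^{\beta-1}$, and multiplying the negative quantity $-\frac{\beta}{\alpha(\gamma-1)}\varphi(t)$ by the larger factor $v_{max}^{\beta-1}$ only decreases it, giving exactly \eqref{pfLaE}.

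Since every step is elementary, there is no genuine obstacle; the points deserving care are the sign bookkeeping in the last step (replacing $v^{\beta-1}$ by $v_{max}^{\beta-1}$ is legitimate precisely because the coefficient is negative) and the admissibility of the definition of $\beta$, namely that $\beta(t)\in(1,\frac{\gamma}{\gamma-1})$ holds exactly when $\alpha(t)>1$, which is the case as soon as $K>0$. In the degenerate case $K=0$ one has $\alpha\equiv1$, $\beta\equiv1$, and the conclusion collapses to the classical Aronson-Benilan bound $\Delta v\ge-\frac{a}{(\gamma-1)t}$, already contained in \eqref{PMELYH1}.
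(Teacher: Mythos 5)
Your proposal is correct and is essentially the paper's own argument: both expand $\Delta(v^{\beta})$ by the chain rule, use the pressure equation $v_t=(\gamma-1)v\Delta v+|\nabla v|^2$ together with the defining relation $\alpha(\gamma-1)(\beta-1)=\alpha-1$ to rewrite it as $\frac{\beta v^{\beta-1}}{\alpha(\gamma-1)}\bigl(\alpha\frac{v_t}{v}-\frac{|\nabla v|^2}{v}\bigr)$, and then apply \eqref{PMELYH1} and the bound $v^{\beta-1}\le v_{max}^{\beta-1}$ with the correct sign. The only difference is cosmetic (you factor out $v^{\beta-2}$ rather than $v^{\beta-1}$), and your explicit attention to the sign when replacing $v^{\beta-1}$ by $v_{max}^{\beta-1}$ is a welcome clarification of a step the paper leaves implicit.
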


This paper is organized as follows. In Section 2, we establish some evolution equations and then prove the entropy monotonicity formulae,  i.e. Theorem \ref{KPMEentropy} and Theorem \ref{WKPMEentropy}. In section 3, we obtain the Qian type differential Harnack estimate. Finally, Harnack inequalities and Laplacian estimates are derived as applications.

\section{Entropy monotonicity formulae}

Let~$(M,g)$ be a closed Riemannian manifold. Suppose~$u$~be a smooth solution to \eqref{PME} and $v=\frac{\gamma}{\gamma-1}u^{\gamma-1}$ be the pressure function, define the parabolic operator
$$
\square:=\partial_t-(\gamma-1)v\Delta.
$$
Thus $v$ satisfies the equation
$$
\label{pressure}\square v =|\nabla v|^2.
$$

\begin{lemma}\label{pmeBochner} Let  $\alpha,\beta$ be two constants and $w=|\nabla v|^2$, $F_{\alpha}=\alpha\frac{v_t}{v}-\frac{|\nabla v|^2}{v}$,
then we have the following evolution equations(See \cite{LNVV}),
\begin{align}
\label{pmeBochner0}\square v_t=&(\gamma-1) v_t\Delta v+2\langle\nabla v,\nabla v_t\rangle,\\
\label{pmeBochner2}\square v^{\beta}=&\beta\big(\beta+\gamma-\beta\gamma\big)v^{\beta-1}w,\\
\label{pmeBochner3}\square w=&2\langle\nabla v,\nabla w\rangle
+2(\gamma-1)w\Delta v
-2(\gamma-1)v\Big(|\nabla\nabla v|^2+{\rm Ric}(\nabla v,\nabla v)\Big)\\
\label{pmeBochner4}\square F_{\alpha}=&2\gamma \left\langle \nabla v ,\nabla F_{\alpha}\right\rangle+2(\gamma-1)\Big(|\nabla\nabla v|^2+{\rm Ric}(\nabla v,\nabla v)\Big)
+\Big(F_1^2+(\alpha-1)\Big(\frac{v_t}v\Big)^2\Big).
\end{align}
\end{lemma}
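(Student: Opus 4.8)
The plan is to derive all four identities from the single pressure equation $\square v=|\nabla v|^2$, together with the Bochner formula and the product rule for $\square$. For \eqref{pmeBochner0}, I would differentiate $\square v=|\nabla v|^2$ in $t$; since $\partial_t$ commutes with $\Delta$ on the fixed manifold, this gives $v_{tt}-(\gamma-1)(v_t\Delta v+v\Delta v_t)=2\langle\nabla v,\nabla v_t\rangle$, which rearranges precisely to $\square v_t=v_{tt}-(\gamma-1)v\Delta v_t=(\gamma-1)v_t\Delta v+2\langle\nabla v,\nabla v_t\rangle$.

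For \eqref{pmeBochner2}, a direct computation using $\Delta v^{\beta}=\beta v^{\beta-1}\Delta v+\beta(\beta-1)v^{\beta-2}w$ and $\partial_t v^{\beta}=\beta v^{\beta-1}v_t$ gives $\square v^{\beta}=\beta v^{\beta-1}\big(v_t-(\gamma-1)v\Delta v\big)-(\gamma-1)\beta(\beta-1)v^{\beta-1}w=\beta v^{\beta-1}w\,[1-(\gamma-1)(\beta-1)]$, and one checks $1-(\gamma-1)(\beta-1)=\beta+\gamma-\beta\gamma$. For \eqref{pmeBochner3}, I would apply the Bochner formula $\Delta w=2|\nabla\nabla v|^2+2\langle\nabla v,\nabla\Delta v\rangle+2{\rm Ric}(\nabla v,\nabla v)$ and compute $\langle\nabla v,\nabla v_t\rangle$ by taking the gradient of $v_t=(\gamma-1)v\Delta v+w$, namely $\langle\nabla v,\nabla v_t\rangle=(\gamma-1)w\Delta v+(\gamma-1)v\langle\nabla v,\nabla\Delta v\rangle+\langle\nabla v,\nabla w\rangle$. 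Substituting both into $\square w=2\langle\nabla v,\nabla v_t\rangle-(\gamma-1)v\Delta w$, the two $\langle\nabla v,\nabla\Delta v\rangle$-terms cancel and \eqref{pmeBochner3} drops out.

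The real work is \eqref{pmeBochner4}. I would first record the product rule $\square(gh)=g\,\square h+h\,\square g-2(\gamma-1)v\langle\nabla g,\nabla h\rangle$. Setting $\phi:=\alpha v_t-w$ so that $F_{\alpha}=\phi/v$, apply this to $\phi=F_{\alpha}v$ and solve for $\square F_{\alpha}$:
$$\square F_{\alpha}=\frac{1}{v}\,\square\phi-\frac{w}{v}F_{\alpha}+2(\gamma-1)\langle\nabla F_{\alpha},\nabla v\rangle .$$
From \eqref{pmeBochner0} and \eqref{pmeBochner3}, and using $2\alpha\langle\nabla v,\nabla v_t\rangle-2\langle\nabla v,\nabla w\rangle=2\langle\nabla v,\nabla\phi\rangle$, one obtains $\square\phi=(\gamma-1)\Delta v\,(\phi-w)+2\langle\nabla v,\nabla\phi\rangle+2(\gamma-1)v\big(|\nabla\nabla v|^2+{\rm Ric}(\nabla v,\nabla v)\big)$. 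Writing $\nabla\phi=v\nabla F_{\alpha}+F_{\alpha}\nabla v$ converts $\frac{2}{v}\langle\nabla v,\nabla\phi\rangle$ into $2\langle\nabla v,\nabla F_{\alpha}\rangle+\frac{2w}{v}F_{\alpha}$, so the gradient terms assemble to $2\gamma\langle\nabla v,\nabla F_{\alpha}\rangle$ while the Hessian/Ricci term retains the coefficient $2(\gamma-1)$. It then remains to identify the leftover $\frac{(\gamma-1)\Delta v(\phi-w)}{v}+\frac{w}{v}F_{\alpha}$ with $F_1^2+(\alpha-1)(v_t/v)^2$. Here the key observation is that the pressure equation gives $(\gamma-1)\Delta v=(v_t-w)/v=F_1$; combining this with $\phi-w=\alpha v_t-2w$ and the shorthand $p=v_t/v$, $q=w/v$, the leftover equals $(p-q)(\alpha p-2q)+(\alpha p-q)q=\alpha p^2-2pq+q^2=(p-q)^2+(\alpha-1)p^2=F_1^2+(\alpha-1)(v_t/v)^2$, which is \eqref{pmeBochner4}.

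I expect the bookkeeping in \eqref{pmeBochner4} to be the only genuine obstacle: everything depends on noticing that the $\Delta v$ contribution and the quotient-rule remainder collapse into the perfect square $(p-q)^2+(\alpha-1)p^2$, for which the identity $(\gamma-1)\Delta v=F_1$ coming straight from the pressure equation is indispensable.
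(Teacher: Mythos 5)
Your proposal is correct. The paper itself gives no proof of this lemma (it simply cites Lu--Ni--V\'azquez--Villani), and your derivation is the standard one: all four identities follow from the pressure equation $\square v=|\nabla v|^2$, the Bochner formula, and the product rule $\square(gh)=g\,\square h+h\,\square g-2(\gamma-1)v\langle\nabla g,\nabla h\rangle$. I checked the delicate bookkeeping in \eqref{pmeBochner4}: with $p=v_t/v$, $q=w/v$ the leftover terms are indeed $(p-q)(\alpha p-2q)+q(\alpha p-q)=\alpha p^2-2pq+q^2=F_1^2+(\alpha-1)p^2$, and the gradient terms assemble to $2\gamma\langle\nabla v,\nabla F_{\alpha}\rangle$ exactly as you claim, the identity $(\gamma-1)\Delta v=F_1$ being the essential input.
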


By means of the  Bochner-type formulae in Lemma \ref{pmeBochner}, we have the following integral formulae, which  are useful for the proof of the $W$-entropy
formulae (See\cite{LNVV}).
\begin{lemma}\label{pmeint}
Let $u$ be a positive solution to \eqref{PME}, we have
\begin{align}\label{pmeint1}
\frac{d}{dt}\int_Mvu\,dV=&(\gamma-1)\int_M(\Delta v)vu\,d\mu=-\gamma\int_M|\nabla v|^2u\,dV.\\
\label{pmeint2}
\frac{d^2}{dt^2}\int_Mvu\,dV=&2(\gamma-1)\int_M\left(|\nabla\nabla v|^2+{\rm Ric}(\nabla v,\nabla v)+(\gamma-1)(\Delta v)^2\right)vu\,dV.
\end{align}
\end{lemma}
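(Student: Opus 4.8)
The final statement to prove is Lemma~\ref{pmeint}, the two integral identities for $\frac{d}{dt}\int_M vu\,dV$ and $\frac{d^2}{dt^2}\int_M vu\,dV$. The plan is to obtain the first-derivative formula directly from the evolution equations, and then differentiate once more, using integration by parts on the closed manifold together with the Bochner-type identity \eqref{pmeBochner3} to produce the Hessian and Ricci terms.

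First I would compute $\frac{d}{dt}\int_M vu\,dV$. Since $u$ solves \eqref{PME}, $\partial_t u = \Delta u^\gamma = \Delta_{\phantom{f}}(\text{something})$; more usefully, $vu = \frac{\gamma}{\gamma-1}u^\gamma$, so $\partial_t(vu) = \frac{\gamma}{\gamma-1}\partial_t(u^\gamma)$. Rather than that route, I would instead write $\partial_t(vu) = v_t u + v u_t$ and use $u_t = \Delta u^\gamma$ and the pressure equation $v_t = (\gamma-1)v\Delta v + |\nabla v|^2$. Integrating over the closed $M$, the term $\int_M v u_t\,dV = \int_M v\,\Delta u^\gamma\,dV = \int_M (\Delta v)u^\gamma\,dV$ by self-adjointness of $\Delta$; writing $u^\gamma = \frac{\gamma-1}{\gamma}vu$ converts this to $\frac{\gamma-1}{\gamma}\int_M(\Delta v)\,vu\,dV$. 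Combining with the $v_t u$ contribution and simplifying (again integrating $\int_M (\Delta v)vu$ by parts against the structure $u = (\frac{\gamma-1}{\gamma}v)^{1/(\gamma-1)}$ to express $\nabla u$ in terms of $\nabla v$) yields the stated chain $(\gamma-1)\int_M(\Delta v)vu\,dV = -\gamma\int_M|\nabla v|^2 u\,dV$. The key algebraic fact here is $\nabla u = \frac{1}{\gamma-1}\big(\frac{\gamma-1}{\gamma}v\big)^{1/(\gamma-1)-1}\cdot\frac{\gamma-1}{\gamma}\nabla v$, i.e. $u\,\nabla v$ and $v\,\nabla u$ are proportional, so the integration by parts closes up cleanly.

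For the second derivative, I would differentiate $\frac{d}{dt}\int_M vu\,dV = (\gamma-1)\int_M(\Delta v)vu\,dV$ again in $t$. This produces three types of terms: one from $\partial_t(\Delta v) = \Delta v_t$, one from $\partial_t v$, and one from $\partial_t u$ (equivalently, differentiate the measure-weighted quantity $(\Delta v)vu\,dV$). The cleanest bookkeeping is to note $\frac{d^2}{dt^2}\int_M vu\,dV = \frac{d}{dt}\big(-\gamma\int_M|\nabla v|^2 u\,dV\big) = -\gamma\int_M \square(|\nabla v|^2)\,u\,dV$ after accounting for how $u\,dV$ evolves under the adjoint of $\square$ — more precisely, since $\int_M (\square\phi)\,u\,dV = \frac{d}{dt}\int_M \phi\,u\,dV$ holds for the relevant class of $\phi$ on the closed manifold (this is the formal adjointness of $\square$ with respect to $u\,dV$, which one checks by integration by parts using the proportionality above). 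Then I substitute \eqref{pmeBochner3} for $\square w$, integrate the $2\langle\nabla v,\nabla w\rangle$ and $2(\gamma-1)w\Delta v$ terms by parts against $u\,dV$, and observe that these boundary-free integrations convert the "bad" first-order terms into exactly $2(\gamma-1)\int_M(\gamma-1)(\Delta v)^2\,vu\,dV$, leaving the manifestly good term $2(\gamma-1)\int_M\big(|\nabla\nabla v|^2 + {\rm Ric}(\nabla v,\nabla v)\big)vu\,dV$ untouched.

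The main obstacle — really the only delicate point — is justifying and correctly executing the integration-by-parts identity that $\square$ is formally adjoint to itself with respect to the time-dependent weight $u\,dV$ on the closed manifold, and tracking the several terms that arise when the $2\langle\nabla v,\nabla w\rangle$ and $w\,\Delta v$ pieces of \eqref{pmeBochner3} are integrated against $u\,dV$ so that they recombine into the single clean $(\gamma-1)(\Delta v)^2$ term. This is a routine but error-prone computation; the proportionality $u\nabla v = \frac{\gamma}{\gamma-1}\cdot\frac{\gamma-1}{\gamma}\,v\nabla u\cdot(\text{const})$ (more simply: $\nabla(vu) $ and the relation $|\nabla v|^2 u = \gamma\, v\,|\nabla u^{\gamma-1}|^2 u^{\cdots}$) is what makes every stray term collapse. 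Since the excerpt attributes both formulae to \cite{LNVV}, I would present the derivation compactly, citing the Bochner identities of Lemma~\ref{pmeBochner} for the one nontrivial input and relegating the integration-by-parts manipulations to a short explicit computation.
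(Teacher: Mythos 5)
Your derivation of \eqref{pmeint1} is essentially correct (note the paper itself states Lemma \ref{pmeint} without proof, citing \cite{LNVV}, so there is no in-paper argument to compare against), but your route to \eqref{pmeint2} rests on a false identity. The adjointness relation you invoke,
\[
\int_M(\square\phi)\,u\,dV=\frac{d}{dt}\int_M\phi\,u\,dV,
\]
does not hold. Since $u_t=\Delta u^{\gamma}$ and $u^{\gamma}=\frac{\gamma-1}{\gamma}vu$, integration by parts on the closed manifold gives
\[
\frac{d}{dt}\int_M\phi\,u\,dV=\int_M\phi_t\,u\,dV+\int_M(\Delta\phi)\,u^{\gamma}\,dV
=\int_M\Big(\phi_t+\tfrac{\gamma-1}{\gamma}\,v\Delta\phi\Big)u\,dV,
\]
whereas $\square\phi=\phi_t-(\gamma-1)v\Delta\phi$; the two sides differ by $\frac{\gamma^2-1}{\gamma}\int_M v(\Delta\phi)u\,dV$, which does not vanish in general. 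A concrete counterexample is $\phi=v$ itself: the pressure equation gives $\int_M(\square v)\,u\,dV=\int_M|\nabla v|^2u\,dV>0$, while \eqref{pmeint1} gives $\frac{d}{dt}\int_Mvu\,dV=-\gamma\int_M|\nabla v|^2u\,dV<0$. If you nonetheless substitute \eqref{pmeBochner3} into your identity, the Hessian--Ricci term appears with coefficient $2\gamma(\gamma-1)$ instead of $2(\gamma-1)$, so the method as described produces a wrong formula, not merely a messier derivation.

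The repair is to differentiate honestly. Either differentiate $(\gamma-1)\int_M(\Delta v)\,vu\,dV$ using $v_t=(\gamma-1)v\Delta v+|\nabla v|^2$ and $u_t=\Delta u^{\gamma}=u\Delta v+\frac{u|\nabla v|^2}{(\gamma-1)v}$, or differentiate $-\gamma\int_M|\nabla v|^2u\,dV$ to obtain
\[
\frac{d^2}{dt^2}\int_Mvu\,dV=-\gamma\int_M\Big(2\langle\nabla v,\nabla v_t\rangle+\tfrac{\gamma-1}{\gamma}\,v\,\Delta|\nabla v|^2\Big)u\,dV,
\]
and then insert the Bochner formula $\frac12\Delta|\nabla v|^2=|\nabla\nabla v|^2+\langle\nabla v,\nabla\Delta v\rangle+{\rm Ric}(\nabla v,\nabla v)$ together with repeated integration by parts against the key relation $\nabla(vu)=\frac{\gamma}{\gamma-1}u\nabla v$ (equivalently $v\nabla u=\frac{1}{\gamma-1}u\nabla v$, the proportionality you already identified). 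Carrying this out, the $\langle\nabla v,\nabla\Delta v\rangle$ and $\langle\nabla v,\nabla v_t\rangle$ contributions recombine into exactly $2(\gamma-1)^2\int_M(\Delta v)^2\,vu\,dV$, and all coefficients close up to give \eqref{pmeint2}. So the first half of your proposal stands, but the second half needs this correction before it proves anything.
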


Applying integral formulae in Lemma \ref{pmeint}, we can deduce the $\mathcal{W}$-entropy formula for the porous medium equation on the closed Riemannian manifolds with  Ricci curvature lower bound.

\begin{proof}[\bf{Proof of Theorem \ref{KPMEentropy}}] Firstly, Boltzmann-Nash type entropy is defined by
$$
\mathcal{N}_K(t):=-\sigma_K(t)\int_Mvu\, dV,
$$
where $\sigma_K(t)$ is a function of $t$, then \eqref{pmeint1}  implies that
\begin{align}\label{Kpment1}
\frac{d}{dt}\mathcal{N}_K(t)=&-\dot{\sigma}_K\int_Mv u\,d\mu-\sigma_K\frac{d}{dt}\int_Mvu\,dV\notag\\
=&-\sigma_K\int_M\left((\gamma-1)\Delta v+(\log\sigma_K)'\right)vu\,dV
\end{align}

By \eqref{pmeint2}, \eqref{Kpment1} and the assumption ${\kappa}=K\sup_{M\times[0,T)}u^{\gamma-1}=\frac{\gamma-1}{\gamma}K\sup_{M\times[0,T)}v$, one has
\begin{small}
\begin{align}\label{Kpment3}
\frac{d^2}{dt^2}\mathcal{N}_K(t)
=&-\sigma_K\frac{d^2}{dt^2}\int_Mvu\,dV
-2\dot{\sigma}_K\frac{d}{dt}\int_Mvu\,dV-\ddot{\sigma}_K\int_Mv u\,dV\notag\\
=&-2(\gamma-1)\sigma_K\int_M\left[|\nabla\nabla v|^2+{\rm Ric}(\nabla v,\nabla v)+(\gamma-1)(\Delta v)^2\right]vu\,dV\notag\\
&+2\frac{\dot{\sigma}_K}{\sigma_K}\frac{d}{dt}\mathcal{N}_K
+\left(\frac{\ddot{\sigma}_K}{\sigma_K}
-\frac{2\dot{\sigma}_K^2}{\sigma_K^2}\right)\mathcal{N}_K\notag\\
=&-2(\gamma-1)\sigma_K\int_M\left[|\nabla\nabla v|^2+({\rm Ric}+Kg)(\nabla v,\nabla v)+(\gamma-1)(\Delta v)^2\right]vu\,dV\notag\\
&+2(\gamma-1)\sigma_K\int_MK|\nabla v|^2vu\,dV+2\frac{\dot{\sigma}_K}{\sigma_K}\frac{d}{dt}\mathcal{N}_K
+\left(\frac{\ddot{\sigma}_K}{\sigma_K}
-\frac{2\dot{\sigma}_K^2}{\sigma_K^2}\right)\mathcal{N}_K\notag\\
\le&-2(\gamma-1)\sigma_K\int_M \left[|\nabla\nabla v|^2+({\rm Ric}+Kg)(\nabla v,\nabla v )+(\gamma-1)(\Delta v)^2\right]vu\,dV\notag\\
&+2\left(\frac{\dot{\sigma}_K}{\sigma_K}+{\kappa}\right)\frac{d}{dt}\mathcal{N}_K
+\left(\frac{\ddot{\sigma}_K}{\sigma_K}
-\frac{2\dot{\sigma}_K^2}{\sigma_K^2}-2{\kappa}\frac{\dot{\sigma}_K}{\sigma_K}\right)\mathcal{N}_K.
\end{align}
\end{small}
Inspired by S. Li and X.-D. Li \cite{LiLi2}(see also \cite{LiLi3} for a survey),
we define the Perelman type $\mathcal{W}$-entropy by
\begin{small}
\begin{align}\label{KWentropy}
\mathcal{W}_K(t):=&\frac1{\dot{\alpha}_K(t)}\frac d{dt}(\alpha_K(t)\mathcal{N}_K(t))
=\mathcal{N}_K+\beta_K(t)\frac{d}{dt}\mathcal{N}_K\notag\\
=&-\sigma_K\int_M\Big[\beta_K(\gamma-1)\Delta v+\big(1+(\log\sigma_K)'\beta_K\big)\Big]vu\,dV,\notag\\
=&\sigma_K\beta_K\int_M\left[\gamma\frac{|\nabla v|^2}{v}-\left(\frac{1}{\beta_K}+\frac{\dot{\sigma}_K}{\sigma_K}\right)\right]vu\,dV,
\end{align}
\end{small}
where $\beta_K(t)=\frac{\alpha_K}{\dot{\alpha}_K}$, then
$$
\frac{d}{dt}\mathcal{W}_K(t)=\beta_K\left(\frac{d^2}{dt^2}\mathcal{N}_K
+\frac{1+\dot{\beta}_K}{\beta_K}\frac{d}{dt}\mathcal{N}_K\right).
$$
Combining \eqref{Kpment1} and \eqref{Kpment3},
we have
\begin{small}
\begin{align}\label{wkpment1}
\frac{d}{dt}\mathcal{W}_K(t)\le&
-2(\gamma-1)\sigma_K\beta_K\int_M \left[|\nabla\nabla v|^2+({\rm Ric}+Kg)(\nabla v,\nabla v )+(\gamma-1)(\Delta v)^2\right]vu\,dV\notag\\
&+2\beta_K\left(\frac{\dot{\sigma}_K}{\sigma_K}+\frac{1+\dot{\beta}_K}{2\beta_K}
+{\kappa}\right)\frac{d}{dt}\mathcal{N}_K
+\beta_K\left(\frac{\ddot{\sigma}_K}{\sigma_K}
-\frac{2\dot{\sigma}_K^2}{\sigma_K^2}
-2{\kappa}\frac{\dot{\sigma}_K}{\sigma_K}\right)\mathcal{N}_K.
\end{align}
\end{small}
On the other hand,
\begin{align}\label{wkpment2}
(\gamma-1)\left|\nabla_i\nabla_jv+\frac{\eta_K(t)}{n(\gamma-1)}g_{ij}\right|^2
=&(\gamma-1)|\nabla\nabla v|^2+\frac{2\eta_K}{n}\Delta v+\frac{\eta_K^2}{n(\gamma-1)}.
\end{align}
Putting \eqref{wkpment2} into \eqref{wkpment1}, we get
\begin{small}
\begin{align}\label{wkpment3}
&\frac{d}{dt}\mathcal{W}_K(t)\notag\\
\le&
-\sigma_K\beta_K\int_M 2 (\gamma-1)\left[\left|\nabla_i\nabla_jv+\frac{\eta_K}{n(\gamma-1)}g_{ij}\right|^2+({\rm Ric}+Kg)(\nabla v,\nabla v )\right]vu\,dV\notag\\
&-\sigma_K\beta_K\int_M\left[2(\gamma-1)^2(\Delta v)^2+2(\gamma-1)\left((\log\sigma_K)'+\frac{1+\dot{\beta}_K}{2\beta_K}+{\kappa}
-\frac{2\eta_K}{n(\gamma-1)}\right)\Delta v\right]vu\,dV\notag\\
&-\sigma_K\beta_K\int_M\left[(\log\sigma_K)''+ ((\log\sigma_K)')^2+\frac{1+\dot{\beta}_K}{\beta_K}(\log\sigma_K)'-\frac{2\eta_K^2}{n(\gamma-1)}\right]vu\,dV.
\end{align}
\end{small}
Set $\lambda=(\log\sigma_K)'$ and choose a proper function $\eta_{K}(t)$ such that
\begin{equation}\label{etabetak}
\left\{
  \begin{array}{l}
    2\eta_K=\lambda+\frac{1+\dot{\beta}_K}{2\beta_K}+{\kappa}
-\frac{2}{n(\gamma-1)}\eta_K \\
   2\eta_K^2=\lambda'+ \lambda^2+\frac{1+\dot{\beta}_K}{\beta_K}\lambda-\frac{2}{n(\gamma-1)}\eta_K^2,
  \end{array}
\right.
\end{equation}
which is equivalent to
\begin{align}\label{etak}
0=&\eta^2_K-2\lambda\eta_K+\frac{a}{a+1}
\left(\lambda^2-\lambda'+2{\kappa}\lambda\right)\notag\\
=&(\eta_K-\lambda)^2-\frac{1}{a+1}\left(\lambda^2
+a\left(\lambda'-2{\kappa}\lambda\right)\right),
\end{align}
where $a=\frac{n(\gamma-1)}{n(\gamma-1)+2}$. Solving the equation \eqref{etak}, we get a special solution
\begin{equation}\label{lambdaetak}
\lambda=\eta_K=\frac{2a{\kappa}}{1-e^{-2{\kappa}t}}.
\end{equation}
Putting \eqref{lambdaetak} back to system \eqref{etabetak}, we have
$$
\frac{1+\dot{\beta}_K}{\beta_K}=2\kappa\coth (kt),\quad \beta_K=\frac{\sinh(2{\kappa}t)}{2{\kappa}}
$$
and
$$
\alpha_K={\kappa}\tanh({\kappa}t),\quad
\sigma_K=\left(e^{{\kappa}t}\frac{\sinh({\kappa}t)}{\kappa}\right)^{a}=\left(\frac{e^{2{\kappa}t}-1}{2\kappa}\right)^{a}.
$$

Therefore, from \eqref{wkpment3}, we obtain the entropy monotonicity formula,
\begin{small}\begin{align}\label{wkpment4}
\frac{d}{dt}\mathcal{W}_K(t)\le&
-\sigma_K\beta_K\int_M 2 (\gamma-1)\left[\left| \nabla_i\nabla_jv+\frac{\eta_K}{n(\gamma-1)}g_{ij}\right|^2+({\rm Ric}+Kg)(\nabla v,\nabla v )\right]vu\,dV\notag\\
&-\sigma_K\beta_K\int_M2\left[(\gamma-1)\Delta v+\eta_K\right]^2vu\,dV.
\end{align}
\end{small}
Thus, when ${\rm Ric}\ge-Kg$, $K\ge0$, $\sigma_K>0,\eta_K>0$, Perelman type entropy \eqref{KWentropy} is monotone decreasing  along the porous medium equation \eqref{PME}.
\end{proof}

In \cite{LiLi,LiLi2}, when $n\le m\in \mathbb{N}$, Li-Li~gave another proof for entropy formula of Witten Laplacian and explained its geometric meaning by warped product, so we can also obtain the entropy formula  \eqref{WKPMEent} for the weighted porous medium equation by the analogous method. In fact, once one can obtain the entropy formula on Riemannian manifold, the weighted version is a direct result by this warped product approach when $m>n$ is a positive integer.

\begin{proof}[\bf{Proof of Theorem \ref{WKPMEentropy}}]
Let $\overline{M}=M\times N$ be a warped product manifold equipped the metric
$$
g_{\overline{M}}=g_M+e^{-\frac{2f}q}g_N,\quad q={m-n},
$$
where $m,n,q$ are the dimensions of $\overline{M}, M, N$ respectively,
then the volume measure satisfies
\begin{equation}\label{volmeasure}
\,\,dV_{\overline{M}}=e^{-f}\,\,dV_M\otimes \,\,dV_N=\,d\mu\otimes \,\,dV_N.
\end{equation}
where $(N^q,g_N)$ is a compact Riemannian manifold. Let $\pi:M\times N\to M$ be a nature projection map, $\overline{X}$ and $X$ are vector fields on $\overline{M}$ and $M$ respectively, then \cite{Besse}
$$
{\rm Ric}_{\overline{M}}(\overline{X},\overline{X})=\pi^*\left({\rm Ric}_{M}(X,X)-qe^{\frac fq}\nabla\nabla e^{-\frac fq}(X,X)\right),
$$
that is
\begin{equation}\label{mBER}
{\rm Ric}_f^m(X,X)=\pi_*\left({\rm Ric}_{\overline{M}}(\overline{X},\overline{X})\right).
\end{equation}
Assume~$V_N(N)=1$, $\overline{\nabla}$~denotes Levi-Civita connection on $(\overline{M},g_{\overline{M}})$, for any ~$v\in C^2(M)$, by S.Li and X.-D.Li \cite{LiLi}, we know
\begin{equation}\label{warped}
\overline{\nabla}_i\overline{\nabla}_jv=\nabla_i\nabla_jv,\quad
\overline{\nabla}_{\alpha}\overline{\nabla}_{\beta}v=-\frac 1q g_{\alpha\beta}g^{kl}\partial_kv\partial_lf,\quad
\overline{\nabla}_i\overline{\nabla}_{\alpha}v=0.
\end{equation}
where $i,j=1,2,\cdots,n$, and $\alpha,\beta=n+1,n+2,\cdots,m$, moreover,
\begin{equation}\label{warpedLaplacian}
\Delta_{\overline{M}}=\Delta_f+e^{-\frac{2f}{q}}\Delta_N.
\end{equation}
Thus, if~$u:M\to[0,\infty)$~ is a positive solution to ~\eqref{PME},
then~$u$~ satisfies the following equation
$$
\frac{\partial u}{\partial t}=\bar{\Delta}(u^{\gamma}).
$$
Moreover, in term of \eqref{volmeasure}, the weighted entropy functional~\eqref{WKentropy}~is indeed the functional on ~$(\overline{M},g_{\overline{M}})$
$$
\overline{\mathcal{W}}_K(v,t)=\sigma_K\beta_K\int_{\overline{M}}\left[\gamma\frac{|\nabla v|^2}{v}-\left(\frac{1}{\beta_K}+\frac{\dot{\sigma}_K}{\sigma_K}\right)\right]vu\,dV_{\overline{M}},
$$
Applying the entropy formula on $(\overline{M},g_{\overline{M}})$ in Theorem \ref{KPMEentropy}, we have
\begin{small}
\begin{align}\label{KPMEnt}
&\frac{d}{dt}\overline{\mathcal{W}}_K(t)\notag\\
\le&
-2(\gamma-1)\bar{\sigma}_K\bar{\beta}_K\int_{\overline{M}}\left(\left| \bar{\nabla}_i\bar{\nabla}_jv+\frac{\bar{\eta}_K}{m(\gamma-1)}\bar{g}_{ij}\right|_{\bar{g}}^2
+(\overline{{\rm Ric}}+K\bar{g})(\bar{\nabla} v,\bar{\nabla} v )\right)vu\,dV_M dV_N\notag\\
&-2\bar{\sigma}_K\bar{\beta}_K\int_{\overline{M}}\left((\gamma-1)\bar{\Delta} v+\eta_K\right)^2vu\,dV_M dV_N.
\end{align}
\end{small}
where
$$
\bar{\eta}_K=\frac{2\bar{a}{\kappa}}{1-e^{-2{\kappa}t}},\quad\bar{\beta}_K=\frac{\sinh(2{\kappa}t)}{2{\kappa}},\quad
\bar{\sigma}_K=\left(\frac{e^{2{\kappa}t}-1}{2\kappa}\right)^{{\bar{a}}},\quad \bar{a}=\frac{m(\gamma-1)}{m(\gamma-1)+2}.
$$
An analogous calculation in \cite{LiLi} gives,
\begin{small}
\begin{align}\label{WHessian}
\Big|\overline{\nabla}_i\overline{\nabla}_jv+\frac{\bar{\eta}_K}{m(\gamma-1)}\bar{g}_{ij}\Big|^2
=&\Big|\nabla_i\nabla_jv+\frac{\bar{\eta}_K}{m(\gamma-1)}g_{ij}\Big|^2+\Big|\overline{\nabla}_{\alpha}\overline{\nabla}_{\beta}v+\frac{\bar{\eta}_K}{m(\gamma-1)}g_{\alpha\beta}\Big|^2\\
=&\Big|\nabla_i\nabla_jv+\frac{\bar{\eta}_K}{m(\gamma-1)}g_{ij}\Big|^2+\frac{1}{m-n}\Big|\nabla v\cdot\nabla f-\frac{\bar{\eta}_K(m-n)}{m(\gamma-1)}\Big|^2
\end{align}
\end{small}
Combining ~\eqref{mBER}~with~\eqref{warpedLaplacian}~, one has
\begin{equation}\label{BERic}
{\rm \overline{Ric}}(\overline{\nabla} v,\overline{\nabla} v)={\rm Ric}^m_f(\nabla v,\nabla v),
\end{equation}
and
\begin{equation}\label{WLaplacian}
\bar{\Delta}v=\Delta_{f}v.
\end{equation}
Put \eqref{WHessian}, \eqref{BERic} and \eqref{WLaplacian} into \eqref{KPMEnt}, we can get the  desired result~\eqref{WKPMEent}.
\end{proof}

\section{Differential Harnack estimates and applications}
In this section, we prove a Qian type differential Harnack estimates \cite{QianB} for the porous medium equation on the compact Riemannian manifolds with lower bound of Ricci curvature, which generalize the Lu-Ni-Vazquez-Villani's estimate \cite{LNVV} and Huang-Huang-Li's estimate \cite{HHL}, Harnack inequalities and Laplacian estimate are derived as applications.
\begin{proposition} Let  $\sigma(t)$ be a functions of $t$ and satisfy the assumption (A1)(A2) and $\alpha(t), \varphi(t)$ are defined in \eqref{PMEalphavarphi}.  Suppose~$u$~be a smooth solution to \eqref{PME} and $v=\frac{\gamma }{\gamma-1}u^{\gamma-1}$ the pressure funciton. Define
$$
F_{\alpha}:=\alpha(t)\frac{v_t}{v}-\frac{|\nabla v|^2}{v}+\varphi(t).
$$
Set ${\kappa}=K\sup\limits_{M\times(0,T]}(u^{\gamma-1})$, for any $\gamma>1$, we have
\begin{small}
\begin{align}\label{GWDBochner1}
\square F_{\alpha}\ge&2\gamma \left\langle \nabla v ,\nabla F_{\alpha}\right\rangle+\frac1a\left((\gamma-1)\Delta v+\frac{a\sigma'}{2\sigma}+{a\gamma\kappa}\right)^2-\frac{\sigma'}{\sigma}F_{\alpha}
+(\alpha-1)\Big(\frac{v_t}v\Big)^2,
\end{align}
\begin{align}\label{GWDBochner2}
\square(\sigma F_{\alpha})\ge&2\gamma\sigma \left\langle \nabla v ,\nabla F_{\alpha}\right\rangle+\frac{\sigma}a\left((\gamma-1)\Delta v+\frac{a\sigma'}{2\sigma}+{a\gamma\kappa}\right)^2
+(\alpha-1)\sigma\Big(\frac{v_t}v\Big)^2.
\end{align}
\end{small}
\end{proposition}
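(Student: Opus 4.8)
The plan is to derive \eqref{GWDBochner1} directly from the Bochner-type identity \eqref{pmeBochner4} of Lemma~\ref{pmeBochner}, and then to read off \eqref{GWDBochner2} by multiplying through by $\sigma$. Since $\alpha$ and $\varphi$ depend only on $t$ and the operator $\square=\partial_t-(\gamma-1)v\Delta$ annihilates the purely spatial part, one has $\square(\alpha(t)g)=\alpha'(t)g+\alpha(t)\square g$ and $\square(\varphi(t))=\varphi'(t)$; also $\nabla F_\alpha$ is unaffected by the shift $+\varphi$. Hence applying \eqref{pmeBochner4} with $\alpha$ frozen at its current value gives
$$\square F_\alpha=2\gamma\langle\nabla v,\nabla F_\alpha\rangle+2(\gamma-1)\big(|\nabla\nabla v|^2+{\rm Ric}(\nabla v,\nabla v)\big)+F_1^2+(\alpha-1)\Big(\tfrac{v_t}{v}\Big)^2+\alpha'(t)\tfrac{v_t}{v}+\varphi'(t),$$
where, by the pressure equation $\square v=|\nabla v|^2$, one has the identity $F_1=\tfrac{v_t}{v}-\tfrac{|\nabla v|^2}{v}=(\gamma-1)\Delta v$, which I would use throughout.

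Next I would estimate the Hessian and curvature terms pointwise. Cauchy--Schwarz gives $|\nabla\nabla v|^2\ge\tfrac1n(\Delta v)^2$; the hypothesis ${\rm Ric}\ge-Kg$ gives ${\rm Ric}(\nabla v,\nabla v)\ge-K|\nabla v|^2$, and writing $|\nabla v|^2=v\cdot\tfrac{|\nabla v|^2}{v}$ with $Kv\le K\sup v=\tfrac{\gamma}{\gamma-1}\kappa$ (from $\kappa=K\sup u^{\gamma-1}$ and $u^{\gamma-1}=\tfrac{\gamma-1}{\gamma}v$) converts the Ricci term into a multiple of $\kappa\,\tfrac{|\nabla v|^2}{v}$. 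Because $\tfrac2n+(\gamma-1)=\tfrac{\gamma-1}{a}$ with $a=\tfrac{n(\gamma-1)}{n(\gamma-1)+2}$, the two $(\Delta v)^2$ contributions $\tfrac{2(\gamma-1)}{n}(\Delta v)^2$ and $F_1^2=(\gamma-1)^2(\Delta v)^2$ amalgamate into $\tfrac1a\big((\gamma-1)\Delta v\big)^2$. At this stage the whole right-hand side is built from the three quantities $(\gamma-1)\Delta v$, $\tfrac{|\nabla v|^2}{v}$ and $\tfrac{v_t}{v}$, linked by $\tfrac{v_t}{v}=(\gamma-1)\Delta v+\tfrac{|\nabla v|^2}{v}$, so I can present the lower bound for $\square F_\alpha$ as a quadratic in $(\gamma-1)\Delta v$ with coefficients formed from $\sigma,\sigma',\kappa,\alpha,\varphi$ together with one remaining occurrence of $\tfrac{v_t}{v}$.

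The crux is then to complete the square in $(\gamma-1)\Delta v$. The quadratic has leading coefficient $\tfrac1a$, so I want the linear and constant pieces to be absorbed into $\tfrac1a\big((\gamma-1)\Delta v+\tfrac{a\sigma'}{2\sigma}+a\gamma\kappa\big)^2$, with the unmatched remainder collecting exactly into $-\tfrac{\sigma'}{\sigma}F_\alpha+(\alpha-1)\big(\tfrac{v_t}{v}\big)^2$. This is precisely what forces the closed forms in \eqref{PMEalphavarphi}: differentiating shows that $\alpha(t)=1+\tfrac{2\kappa}{\sigma}\int_0^t\sigma$ satisfies an identity of the form $\alpha'+\tfrac{\sigma'}{\sigma}(\alpha-1)=\mathrm{const}\cdot\kappa$, which is the relation needed to dispose of the stray $\tfrac{v_t}{v}$-terms, while the companion relation $(\sigma\varphi)'=\sigma\big(\varphi'+\tfrac{\sigma'}{\sigma}\varphi\big)$ identifies $\varphi$ as the solution of the associated first-order linear ODE with integrating factor $\sigma$, which integrates to the formula in \eqref{PMEalphavarphi}. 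Assumptions $\mathbf{(A1)}$--$\mathbf{(A2)}$ (giving $\sigma(0^+)=0$, $\tfrac{\sigma}{\sigma'}\to0$, and integrability of $\tfrac{(\sigma')^2}{\sigma}$ near $0$) are exactly what makes these closed forms finite and kills the boundary contributions at $t=0$. Since $\alpha(t)\ge1$, the surviving term $(\alpha-1)\big(\tfrac{v_t}{v}\big)^2\ge0$, and retaining it yields \eqref{GWDBochner1}.

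Finally \eqref{GWDBochner2} follows mechanically: $\square(\sigma F_\alpha)=\sigma'F_\alpha+\sigma\,\square F_\alpha$, so multiplying \eqref{GWDBochner1} through by $\sigma(t)>0$ cancels the $\sigma\cdot\big(-\tfrac{\sigma'}{\sigma}F_\alpha\big)$ against the $+\sigma'F_\alpha$ and leaves the stated inequality. I expect the main obstacle to be purely computational: carrying out the square completion so that every cross-term involving $\Delta v$, $\tfrac{|\nabla v|^2}{v}$ and $\tfrac{v_t}{v}$ cancels exactly — this bookkeeping is what pins down the two ODEs for $\alpha$ and $\varphi$, and hence the necessity of hypotheses $\mathbf{(A1)}$--$\mathbf{(A2)}$.
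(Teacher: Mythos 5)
Your proposal follows essentially the same route as the paper: start from the Bochner-type identity \eqref{pmeBochner4} (picking up the extra $\alpha'\frac{v_t}{v}+\varphi'$ from the time-dependence of $\alpha$ and $\varphi$), apply Cauchy--Schwarz and the Ricci lower bound with $Kv\le\frac{\gamma}{\gamma-1}\kappa$, merge the two $(\Delta v)^2$ contributions into $\frac1a((\gamma-1)\Delta v)^2$, complete the square, and let the resulting first-order ODEs (with integrating factor $\sigma$) pin down $\alpha$ and $\varphi$; the passage to \eqref{GWDBochner2} via $\square(\sigma F_\alpha)=\sigma\square F_\alpha+\sigma'F_\alpha$ is also the paper's. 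The only difference is cosmetic: the paper parametrizes the square's center by an auxiliary function $\eta(t)$ and solves the system \eqref{sigeta}, whereas you write the center directly as $\frac{a\sigma'}{2\sigma}+a\gamma\kappa$ — the same bookkeeping.
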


\begin{proof} By using of \eqref{pmeBochner4}, we have
 \begin{align}\label{WDBochner4}
\square F_{\alpha}=&2\gamma\left\langle \nabla v ,\nabla F_{\alpha}\right\rangle+2(\gamma-1)\Big(|\nabla\nabla v|^2+{\rm Ric}(\nabla v,\nabla v)\Big)\notag\\
&+\left(((\gamma-1)\Delta v)^2+(\alpha-1)\Big(\frac{v_t}v\Big)^2\right)
+\alpha'\Big(\frac{v_t}{v}\Big)+\varphi'.
\end{align}
 Applying Cauchy-Schwartz inequality and \eqref{WDBochner4}, when ${\rm Ric}\ge-Kg$, we get
\begin{small}
\begin{align}\label{pmegradientest}
&\square F_{\alpha}-2\gamma \left\langle \nabla v ,\nabla F_{\alpha}\right\rangle\notag\\
\ge&\frac{1}{a}((\gamma-1)\Delta v)^2-2(\gamma-1)K|\nabla v|^2+(\alpha-1)\Big(\frac{v_t}v\Big)^2
+\alpha'\Big(\frac{v_t}{v}\Big)+\varphi'\notag\\
=&\frac{(\gamma-1)^2}{a}(\Delta v+\eta)^2-\frac{2 (\gamma-1)^2}{a}\eta\Delta v-\frac{(\gamma-1)^2}{a}\eta^2-2(\gamma-1)K|\nabla v|^2\notag\\
&+(\alpha-1)\Big(\frac{v_t}v\Big)^2
+\alpha'\Big(\frac{v_t}{v}\Big)+\varphi'\notag\\
\ge&\frac{(\gamma-1)^2}{a}(\Delta v+\eta)^2+\Big(\alpha'-\frac{2 (\gamma-1)}{a}\eta\Big)\frac{v_t}{v}-\Big(2\gamma{\kappa}
-\frac{2 (\gamma-1)}{a}\eta\Big)\frac{|\nabla v|^2}{v}\notag\\
&-\frac{(\gamma-1)^2}{a}\eta^2+(\alpha-1)\Big(\frac{v_t}v\Big)^2
+\varphi'\notag\\
=&\frac{(\gamma-1)^2}{a}(\Delta v+\eta)^2+\Big(2\gamma{\kappa}-\frac{2 (\gamma-1)}{a}\eta\Big)\left(\frac{\alpha'-\frac{2(\gamma-1)}{a}\eta}{2\gamma{\kappa}-\frac{2(\gamma-1)}{a}\eta}\frac{v_t}v- \frac{|\nabla v|^2}{v}+\varphi\right)\notag\\
&+(\alpha-1)\Big(\frac{v_t}v\Big)^2+\varphi'-\Big(2\gamma{\kappa}-\frac{2 (\gamma-1)}{a}\eta\Big)\varphi-\frac{(\gamma-1)^2}{a}\eta^2.
\end{align}
\end{small}

Choosing the proper functions $\sigma(t)$ and $\eta(t)$ such that $\alpha(t)$ and $\varphi(t)$ satisfy the following system
\begin{equation}\label{sigeta}
\left\{
  \begin{array}{rl}
\frac{\sigma'}{\sigma}&=\frac{2(\gamma-1)}{a}\eta-2\gamma{\kappa} \\
\alpha&=\frac{\alpha'-\frac{2(\gamma-1)}{a}\eta}{2\gamma{\kappa} -\frac{2 (\gamma-1)}{a}\eta} \\
\eta^2&=\frac{a}{(\gamma-1)^2}\left(\varphi'-\Big(2\gamma{\kappa}-\frac{2 (\gamma-1)}{a}\eta\Big)\varphi\right).
  \end{array}
\right.
\end{equation}
Plugging \eqref{sigeta} into \eqref{pmegradientest}, we get
$$
\square F_{\alpha}\ge2\gamma\left\langle \nabla v ,\nabla F_{\alpha}\right\rangle
+\frac{(\gamma-1)^2}{a}(\Delta v+\eta)^2-\frac{\sigma'}{\sigma}F_{\alpha}
+(\alpha-1)\Big(\frac{v_t}v\Big)^2,
$$
that is \eqref{GWDBochner1}.
Inequality \eqref{GWDBochner2} is a direct result of \eqref{GWDBochner1} and
$$
\square G=\square (\sigma F_{\alpha})=\sigma\square F_{\alpha}+\sigma' F_{\alpha}.
$$
In fact, the first equation in \eqref{sigeta} is equivalent to
$
\frac{2(\gamma-1)}{a}\eta(t)=\frac{\sigma'}{\sigma}+2\gamma{\kappa}.
$
Put this into  the last two equations in \eqref{sigeta}, we have
$$
\left\{
  \begin{array}{rl}
(\sigma\alpha)'=&\sigma'+2\gamma{\kappa}\sigma\\
(\sigma\varphi)'=&\frac{a\sigma}{4}\Big(\frac{\sigma'}{\sigma}+2\gamma{\kappa}\Big)^2,
  \end{array}
\right.
$$
Integral above identities on $[0,t]$, we can obtain the explicit expressions of $\alpha(t)$ and $\varphi(t)$ in \eqref{PMEalphavarphi}.
\end{proof}

\begin{proof}[\bf{Proof of Theorem \ref{pmeGK}}]
Since $\gamma>1$, $a>0$ and $M$ is closed,  the standard parabolic maximum principle in \eqref{GWDBochner2} implies $F_{\alpha}\ge0$, that is \eqref{PMELYH1} in Theorem \ref{pmeGK}.
\end{proof}

\begin{proof}[\bf{Proof of Corollary \ref{Harnack}}]
Let $\varsigma(t)$ be a constant speed geodesic with $\varsigma(t_1)=x_1$ and $\varsigma(t_2)=x_2$ such that $|\dot{\varsigma}(t)|=\frac{d(x_2,x_1)}{t_2-t_1}$. Using differential Harnack estimate \eqref{PMELYH1} and Young inequality, we have
\begin{small}
\begin{align*}
v(x_2,t_2)-v(x_1,t_1)=&\int^{t_2}_{t_1}v_t+\langle\nabla v,\dot{\varsigma}(t)\rangle dt\\
\ge&\int^{t_2}_{t_1}\left(\frac1{\alpha(t)}|\nabla v|^2-\frac{\varphi(t)}{\alpha(t)}v-\frac1{\alpha(t)}|\nabla v|^2-\frac{1}{4}\alpha(t)|\dot{\varsigma}(t)|^{2}\right)dt\\
\ge&-v_{max}\int^{t_2}_{t_1}\frac{\varphi(t)}{\alpha(t)}dt
-\frac{1}{4}\frac{d(x_2,x_1)^{2}}{(t_2-t_1)^{2}}\int^{t_2}_{t_1}\alpha(t)dt
\end{align*}
\end{small}
and
\begin{small}
\begin{align*}
\log\frac{v(x_2,t_2)}{v(x_1,t_1)}
=&\int^{t_2}_{t_1}\left(\frac{d}{dt}\log v(x,t)+\nabla\log v\cdot\dot{\varsigma}(t)\right)dt\\
\ge&\int^{t_2}_{t_1}\left(\frac{1}{\alpha(t)}\Big(|\nabla v|^2-\varphi(t)\Big)
-\frac1{\alpha(t)}|\nabla v|^2-\frac{1}{4}\frac{|\dot{\varsigma}(t)|^{2}}{v_{max}}\alpha(t)\right)dt\\
\ge&-\int^{t_2}_{t_1}\frac{\varphi(t)}{\alpha(t)}dt
-\frac{1}{4}\frac1{v_{max}}\frac{d(x_2,x_1)^{2}}{(t_2-t_1)^{2}}\int^{t_2}_{t_1}\alpha(t)dt.
\end{align*}
\end{small}
This finishes  the proof of Corollary \ref{Harnack}.
\end{proof}

\begin{proof}[\bf{Proof of Corollary \ref{pfLaEst}}]
Since $\alpha(t)>1$, direct calculation implies that
\begin{small}
\begin{align*}
\Delta(v^{\beta})=&\beta v^{\beta-1}\left(\Delta v+(\beta-1)\frac{|\nabla v|^2}{v}\right)\\
=&\frac{1}{\alpha (\gamma-1) }\beta v^{\beta-1}\left(\alpha (\gamma-1)\Delta v+\alpha(\gamma-1)(\beta-1)\frac{|\nabla v|^2}{v}\right)\\
=&\frac{1}{\alpha (\gamma-1) }\beta v^{\beta-1}\left(\alpha \frac{v_t}{v}-\frac{|\nabla v|^2}{v}\right).
\end{align*}
\end{small}
The estimate \eqref{pfLaE} follows from \eqref{PMELYH1}.
\end{proof}

\section*{Acknowledgments} The author would like to thank Professor Xiang-Dong Li and Dr. Songzi Li for their interest and illuminating discussions. The author is also thankful to the anonymous reviewers for their constructive comments and suggestions on the earlier version for this paper.


\end{document}